\newtheorem{theorem}{Theorem}[section]
\newtheorem{proposition}[theorem]{Proposition}
\newtheorem{lemma}[theorem]{Lemma}
\newtheorem{corollary}[theorem]{Corollary}
\theoremstyle{definition}
\newtheorem{definition}[theorem]{Definition}
\newtheorem{example}[theorem]{Example}
\theoremstyle{remark}
\newtheorem{remark}[theorem]{Remark}
\numberwithin{equation}{section}
\newcommand{\cE}{\mathcal{E}}
\newcommand{\N}{\mathbb{N}}
\newcommand{\R}{\mathbb{R}}
\newcommand{\ga}{\gamma }
\newcommand{\Ga}{\Gamma }
\newcommand{\length}{\operatorname{Length}}
\newcommand{\Mod}{\operatorname{Mod}}
\newcommand{\capa}{\operatorname{Cap}}
\newcommand{\defeq}{\mathrel{\mathop:}=}
\newcommand{\etol}{\epsilon_{\text{tol}}}
\DeclareMathOperator{\argmin}{argmin}
\title{Modulus of families of walks on graphs}
\author[Albin]{Nathan Albin$^1$}
\author[Darabi Sahneh]{Faryad Darabi Sahneh$^2$}
\author[Goering]{Max Goering$^3$}
\author[Poggi-Corradini]{Pietro Poggi-Corradini$^1$}
\address{$^1$ Department of Mathematics, Cardwell Hall, Kansas State University,
Manhattan, KS 66506, USA}
\email{albin@math.ksu.edu}\email{pietro@math.ksu.edu}
\address{$^2$ Department of Electrical and Computer Engineering, Rathbone Hall, Kansas State University, Manhattan, KS 66506, USA}
\email{faryad@k-state.edu}
\address{$^3$ Department of Mathematics, University of Washington,
Seattle, WA 98195-4350, USA}
\email{mgoering@uw.edu}
\thanks{This project was supported in part by NSF grant n. 1201427 and n.~1515810.  Goering was also supported  by the I-Center in the Dept. of Mathematics at K-State. Darabi Sahneh was also supported by the Dept. of Electrical and Computer Engineering at K-State.}
\subjclass[2010]{31A15,90C35,05C85}
\begin{document}
%\large
\begin{abstract}
We consider the notion of  modulus of families of walks on graphs. We show how Beurling's famous criterion for extremality, that was formulated in the continuous case, can be interpreted on graphs as an instance of the Karush-Kuhn-Tucker conditions. We then develop an algorithm to numerically compute modulus using Beurling's criterion as our guide.
\end{abstract}                     
\maketitle
\baselineskip=18pt

\section{Introduction}

The theory of modulus of curve families in the plane originally introduced by Beurling and Ahlfors to solve famous open questions in function theory, has been extended over the years to families of curves in $\R^n$ and to abstract metric spaces as well. R.~J.~ Duffin in \cite{duff} developed the related notion of extremal length on graphs, mostly in the context of planar effective resistance problems.  More recently, Oded Schramm \cite{schramm} used a notion of modulus on graphs to prove a striking uniformization theorem with squares. See also \cite{cannonetal1994}  for the relation of discrete modulus with the classical Riemann mapping theorem, and \cite{hp} for a nice introduction to modulus on graphs. 

In what follows we develop a theory of modulus for families of walks. This generalizes the notion of effective conductance, which can be  recovered by restricting to walks that connect two nodes or two sets of nodes. However, we want to emphasize that there are many families of walks that are not ``connecting''.  In particular, in applications such as in network analysis the graphs are seldom planar. So it is appropriate to consider the general case. 
Along the way we try to justify our choice of using walks instead of other notions of curves (such as connected subgraphs) and show why it seems to be a better approach from the point of view of numerical computations. After a short preliminary section, we recall the famous Beurling Criterion for extremality, see \cite[Theorem 4-4, p.~61]{ahl} for the original extremal length formulation or \cite[Theorem 3.1]{ppc} for a more recent formulation using modulus. Then we prove that families of walks can be assumed to be finite, without loss of generality. And hence modulus on graphs can be categorized as a problem of ``ordinary convex optimization'', see \cite{Rockafellar}. In particular, we show that the Beurling Criterion on graphs is an instance of the Karush-Kuhn-Tucker (or KKT) conditions, see \cite{Rockafellar}. With this in mind, we develop a numerical algorithm that always terminates and gives an approximate value of modulus, within a preset tolerance.  Our approach is guided by Beurling's criterion as in \cite{schramm}, in that our algorithm tries to build what we call a Beurling subfamily as it approximates modulus. We then perform various empirical tests that suggest that this could be a reliable tool for further investigations inspired by the wealth of results about modulus in the continuous case. 

We acknowledge the anonymous referee for several helpful suggestions that have improved the exposition.

\section{Notations and Definitions}
                 
We will restrict our study to simple finite connected graphs.
Let $G= (V,E)$ be a {\sf graph} with vertex-set $V$ and edge-set $E$.
We say that $G$ is {\sf simple} if there is at most one undirected edge between
any two distinct vertices, and it is {\sf finite} if the vertex set has
cardinality $|V|\defeq N\in \N$. In this case, the edge-set $E$ can be
thought of as a subset of ${V\choose 2}$, the set of all unordered
pairs from $V$. Therefore the cardinality of $E$ is $|E|\defeq M \leq
{N\choose 2}= (N/2) (N-1)$.

We say that two vertices $x,y$ are {\sf neighbors} and write $x\sim y$
if and only if $\{x,y \}\in E$.  
\begin{definition}\label{def:walk}
A string   $W\defeq x_{0}\ e_1\ x_{1}\ e_2\ x_2\cdots e_n\ x_{n}$ with $x_i\in V$ for $i=0,\dots,n$ and $e_k=\{x_{k-1},x_k\}\in E$, for $k=1,\dots,n$,  is called a {\sf walk (with $n$ hops)} from $x_0$ to $x_n$. For simplicity, we will sometime just list the vertices visited by the walk, and write $W=(x_1,\dots,x_n)$. Also a walk that makes no hops will be called a {\sf trivial} or {\sf constant} walk.
\end{definition}
The graph $G$ is {\sf connected} if for any two vertices
$a,b\in V$ there is a walk from $a$ to $b$. It is known that
connected graphs must satisfy $|E|\geq N-1$ (induction). A walk that does not revisit any vertex  is called a {\sf (simple) path}.

The number of edges that are incident at a vertex $x$ is the
{\sf (local) degree} of $x$ and we indicate it as  $\deg(x)$. Every edge is incident
to two distinct vertices and hence contributes to two local degrees. Therefore
\[
\sum_{x\in V}\deg (x)=2|E|.
\]
This identity is sometimes referred to as the ``Handshake Lemma'', see \cite[Theorem 6.1]{aigner}.
It says that instead of counting edges, one can add degrees,
i.e., switch to $\deg(x)$ which is a function defined on $V$.

Given a subset of vertices $V^{\prime}\subset V$, we let $E
(V^{\prime})\subset E$ be all the edges of $G$ that connect pairs of
vertices in $V^{\prime}$. With this notation $G (V^{\prime})=
(V^{\prime}, E (V^{\prime}))$ is a simple graph which we call the 
{\sf subgraph induced by $V^{\prime}$}. More generally,    
a {\sf subgraph} of $G$ is a graph $G^{\prime}= (V^{\prime},
E^{\prime})$ such that $V^{\prime}\subset V$ and $E^{\prime }\subset E
(V^{\prime})$.

\begin{definition}
A {\sf curve} $\gamma=(V(\gamma), E(\gamma))$  is a connected subgraph of $G$, such that $V(\gamma) \subset V,\  E(\gamma) \subset E$.
\end{definition}
Also define the {\sf trace} of a walk $W$  to be the curve $H=(\{x_i\},\{e_j\})$ consisting of the vertices and edges traversed by $W$. Conversely, any curve $H$ is the trace of some walk.  In either case, $|E(H)|$ is smaller than the number of hops $W$ takes.

Notice also that there are finitely many curves but infinitely many walks. So at first blush one would think that families of curves are a more reasonable object to study than families of walks. Indeed the papers mentioned above \cite{schramm} and \cite{hp} do take the approach of curve families. However, as we will see, from the point of view of numerical computations, families of walks are better suited.

\section{Definition of modulus of families of walks}

Given a function $\rho : E\rightarrow\R$, define
the $\rho$-{\sf length} of a walk $W$ as in Definition \ref{def:walk} to be
 $$\ell_{\rho}(W)\defeq \rho(e_1)+\rho(e_2)+\cdots+\rho(e_n).$$

If $\rho:E\rightarrow [0,\infty)$, we say $\rho$ is a {\sf density}, and  in this case  $\rho(e)$ can be thought of  as a cost or penalization that the walk must incur  when traversing an edge $e$. Alternatively, one could  define densities on the vertex-set $V$ and  that would give rise to {\sf vertex-modulus} as opposed to {\sf edge-modulus}.
The {\sf energy} of a density $\rho$ is $\cE (\rho) \defeq \sum_{e \in E} \rho(e)^2$. More generally,
 $\cE_{p}(\rho) \defeq \sum_{e \in E} \rho(e)^p$ for $p\geq 1$.

\begin{definition}\label{def:admissible}
Given a family of walks $\Gamma$, we say that a density $\rho$ is {\sf admissible} for $\Gamma$ if $\ell_{\rho}(\gamma) \geq 1$,    
for every walk $\gamma \in \Gamma$; and we let $A(\Ga)$ be the set of all admissible densities for $\Ga$.
The {\sf modulus} of a family of walks $\Gamma$ is $$\Mod(\Gamma) \defeq \inf_{\rho \in A(\Gamma )} \cE(\rho).$$ 
A density $\rho_0\in A(\Ga)$ is called {\sf extremal} if $\cE(\rho_0) = \Mod(\Gamma)$.

More generally, for $p\geq 1$,  the {\sf $p$-modulus} of $\Ga$ is defined as $\Mod_p(\Ga)=\inf_{\rho\in A(\Ga)}\cE_p(\rho)$.
\end{definition}

Even though the given definition does not make apparent the role played by shortest paths, the following alternative definition shows that one can minimize over all densities provided the energy functional is normalized by the shortest $\rho$-length.

\begin{proposition}[Alternative Definition]
Let $\Ga$ be a non-empty family of non-trivial walks. Given a density  $\rho:E\rightarrow [0,\infty)$, define $L_{\rho}(\Gamma) \defeq \inf_{\gamma \in \Gamma} \ell_{\rho}(\gamma)$, and let $S(\Ga)\defeq\{\rho:\ L_\rho(\Ga)>0\}$. Then $$\Mod(\Gamma) = \inf_{\rho\in S(\Ga)} \frac{\cE(\rho)}{L_{\rho}(\Ga)^2}.$$
\end{proposition}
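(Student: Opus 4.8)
The plan is to prove the two inequalities separately. Let me write $R(\rho) \defeq \cE(\rho)/L_\Ga(\rho)^2$ for the normalized energy functional appearing on the right-hand side, so the goal is to show $\Mod(\Ga) = \inf_{\rho \in S(\Ga)} R(\rho)$.

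For the inequality $\inf_{\rho \in S(\Ga)} R(\rho) \leq \Mod(\Ga)$, I would observe that every admissible density is already a competitor in the normalized problem. Indeed, if $\rho \in A(\Ga)$, then $\ell_\rho(\ga) \geq 1$ for all $\ga \in \Ga$, so $L_\Ga(\rho) \geq 1 > 0$, giving $\rho \in S(\Ga)$. Then $R(\rho) = \cE(\rho)/L_\Ga(\rho)^2 \leq \cE(\rho)$. Taking the infimum over all $\rho \in A(\Ga)$ on the right yields $\inf_{\rho \in S(\Ga)} R(\rho) \leq \cE(\rho)$ for each admissible $\rho$, and hence $\inf_{\rho \in S(\Ga)} R(\rho) \leq \Mod(\Ga)$.

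For the reverse inequality, the key idea is a \emph{rescaling} argument: the functional $R$ is invariant under multiplying $\rho$ by a positive scalar, since $\cE(t\rho) = t^2 \cE(\rho)$ and $L_\Ga(t\rho) = t\, L_\Ga(\rho)$ for $t > 0$, so $R(t\rho) = R(\rho)$. Given any $\rho \in S(\Ga)$, I would set $t \defeq 1/L_\Ga(\rho) > 0$ and define $\tilde\rho \defeq t\rho$. Then $L_\Ga(\tilde\rho) = t\, L_\Ga(\rho) = 1$, which means $\ell_{\tilde\rho}(\ga) \geq L_\Ga(\tilde\rho) = 1$ for every $\ga \in \Ga$, so $\tilde\rho \in A(\Ga)$. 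Consequently $\Mod(\Ga) \leq \cE(\tilde\rho) = R(\tilde\rho) = R(\rho)$ by the scale-invariance. Taking the infimum over $\rho \in S(\Ga)$ gives $\Mod(\Ga) \leq \inf_{\rho \in S(\Ga)} R(\rho)$.

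The only subtle point, and the one I would treat most carefully, is the step $\ell_{\tilde\rho}(\ga) \geq L_\Ga(\tilde\rho)$: this follows immediately from the definition $L_\Ga(\tilde\rho) = \inf_{\ga \in \Ga} \ell_{\tilde\rho}(\ga)$ as a lower bound over the family, so that rescaling to $L_\Ga(\tilde\rho) = 1$ produces a genuinely admissible density. One should confirm that $L_\Ga(\rho) > 0$ is exactly the finiteness/nondegeneracy condition needed for the rescaling factor $t = 1/L_\Ga(\rho)$ to be well defined and positive, which is precisely why the infimum on the right is taken over $S(\Ga)$ rather than over all densities. The hypothesis that $\Ga$ is non-empty and consists of non-trivial walks guarantees that $L_\Ga(\rho)$ is a genuine infimum over a non-empty family and that $\ell_\rho$ is not identically zero on $\Ga$, so neither side degenerates. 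Combining the two inequalities yields the claimed equality.
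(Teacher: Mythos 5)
Your proposal is correct and follows essentially the same route as the paper: one direction uses the fact that admissible densities lie in $S(\Ga)$ with $L_\Ga(\rho)\geq 1$, and the other direction rescales an arbitrary $\rho\in S(\Ga)$ by $1/L_\Ga(\rho)$ to produce an admissible density whose energy equals the normalized ratio. Your explicit observation of the scale-invariance $R(t\rho)=R(\rho)$ is a slightly cleaner way to phrase the rescaling step (and incidentally avoids the typo in the paper, which writes the normalization as division by $L^2$ rather than $L$), but the argument is the same.
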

\begin{proof} 
Notice that for $\rho\in A(\Ga)$ we have $L_{\rho}(\Ga) \ge 1$.
Therefore  $$\inf_{\rho\in S(\Ga)} \frac{\cE(\rho)}{L_{\rho}(\Ga)^{2}} \le \inf_{\rho\in A(\Ga)} \frac{\cE(\rho)}{L_{\rho}(\Ga)^2}\leq \inf_{\rho\in A(\Ga)} \cE(\rho)=\Mod(\Ga).$$

Conversely suppose $\tilde{\rho}\in S(\Ga)$ is arbitrary.  Let $\rho \defeq \tilde{\rho}/L_{\tilde{\rho}}(\Gamma)$, so that $L_{\rho}(\Ga) = 1$. Then  $\rho$ is admissible and 
\[
\Mod(\Gamma) \le \cE(\rho) = \frac{1}{L_{\tilde{\rho}}(\Gamma)^2} \sum_{e \in E} \tilde{\rho}(e)^{2} = \frac {\cE(\tilde{\rho})}{L_{\tilde{\rho}}(\Gamma)^{2}}.
\]
Now minimize over $\tilde{\rho}\in S(\Ga)$.
\end{proof}

\begin{figure}[h]
\includegraphics[scale=.5]{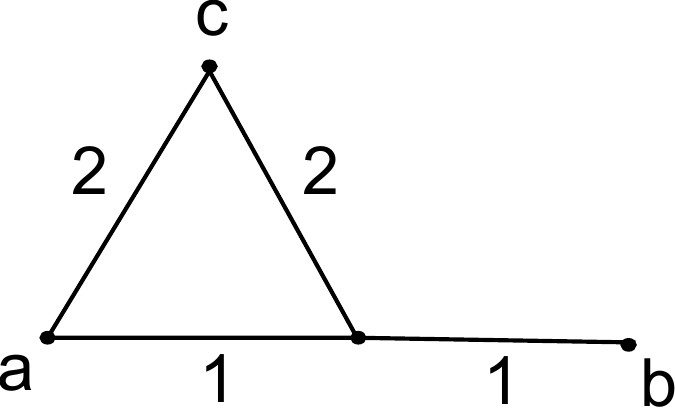}
\caption{Shortest walk or shortest curve from $a$ to $b$ through $c$}\label{fig:shortest}
\end{figure}

\begin{remark}
What is the shortest walk from $a$ to $b$ through $c$ in Figure \ref{fig:shortest}? By taking the shortest walk from $a$ to $c$ first, then the shortest walk from $c$ to $b$, one can see fairly easily that the shortest walk
has $\rho$-length equal to $5$. On the other hand, the shortest curve, i.e., the minimal connected subgraph containing $a,b$ and $c$ can be seen to have length equal to $4$.  It turns out that finding a shortest walk is a fairly easy problem to handle using  Dijkstra's algorithm, which runs in polynomial time $O(N^2)$. While finding a shortest curve through $3$ vertices is an instance of the {\it Graphical Steiner Minimal Tree Problem}, which is NP-complete. This is the main reason why, in this work,  we prefer families of walks to families of curves.
\end{remark}

\begin{proposition}[Basic Properties of Modulus]\label{prop:basic}
Let $\Ga$ (or $\Gamma_i$) be a family of walks in a finite graph $G$. The following properties hold:
\begin{itemize}
\item {\bf Constant Walks:} If $\Ga$ contains a constant walk, then $\Mod(\Ga)=\infty$.
\item {\bf Empty Family:} If $\Ga=\emptyset$,  the empty family, then $\Mod(\Ga) = 0$.
\item {\bf Monotonicity:}  If $\Gamma_{1} \subset \Gamma_{2}$, then $\Mod(\Gamma_{1}) \le \Mod(\Gamma_{2})$.
\item {\bf Countable Subadditivity:}  $\Mod(\cup_{i=1}^\infty \Gamma_i) \le \sum_{i=1}^{\infty} \Mod(\Gamma_{i})$.
\end{itemize}
\end{proposition}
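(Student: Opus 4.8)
The plan is to prove each of the four properties separately, working directly from Definition \ref{def:admissible}.

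\medskip

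\noindent\textbf{Constant Walks.} Suppose $\Ga$ contains a constant (trivial) walk $\gamma$, i.e. one that makes no hops. Then $\ell_\rho(\gamma)=0$ for every density $\rho$, by the definition of $\rho$-length as an empty sum. Hence the admissibility constraint $\ell_\rho(\gamma)\ge 1$ can never be satisfied, so $A(\Ga)=\emptyset$. Taking the infimum of $\cE(\rho)$ over the empty set gives $\inf\emptyset=+\infty$, so $\Mod(\Ga)=\infty$.

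\medskip

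\noindent\textbf{Empty Family.} If $\Ga=\emptyset$, the admissibility condition $\ell_\rho(\gamma)\ge 1$ for all $\gamma\in\Ga$ is vacuously true, so every density is admissible; in particular the zero density $\rho\equiv 0$ lies in $A(\Ga)$. Since $\cE(0)=0$ and $\cE(\rho)\ge 0$ always, the infimum equals $0$, giving $\Mod(\emptyset)=0$.

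\medskip

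\noindent\textbf{Monotonicity and Subadditivity.} For monotonicity, I would observe that enlarging the family only adds constraints: if $\Ga_1\subset\Ga_2$, then any $\rho$ admissible for $\Ga_2$ satisfies $\ell_\rho(\gamma)\ge 1$ for every $\gamma\in\Ga_2\supset\Ga_1$, hence for every $\gamma\in\Ga_1$ as well. Thus $A(\Ga_2)\subset A(\Ga_1)$, and taking the infimum of $\cE$ over the smaller set $A(\Ga_2)$ can only increase it, so $\Mod(\Ga_1)\le\Mod(\Ga_2)$. For countable subadditivity, set $\Ga=\cup_{i=1}^\infty\Ga_i$ and assume each $\Mod(\Ga_i)<\infty$ (otherwise the right-hand side is infinite and there is nothing to prove). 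Fix $\ep>0$ and for each $i$ choose $\rho_i\in A(\Ga_i)$ with $\cE(\rho_i)\le\Mod(\Ga_i)+\ep 2^{-i}$. The natural candidate admissible density for the union is the pointwise supremum, $\rho\defeq\sup_i\rho_i$, since any $\gamma\in\Ga$ lies in some $\Ga_i$, and then $\ell_\rho(\gamma)\ge\ell_{\rho_i}(\gamma)\ge 1$ because $\rho\ge\rho_i$ edgewise. It remains to bound $\cE(\rho)$; using $\rho(e)^2=\sup_i\rho_i(e)^2\le\sum_i\rho_i(e)^2$ and summing over edges gives
\[
\cE(\rho)=\sum_{e\in E}\rho(e)^2\le\sum_{e\in E}\sum_{i=1}^\infty\rho_i(e)^2=\sum_{i=1}^\infty\cE(\rho_i)\le\sum_{i=1}^\infty\Mod(\Ga_i)+\ep.
\]
Since $\rho\in A(\Ga)$ we have $\Mod(\Ga)\le\cE(\rho)$, and letting $\ep\to 0$ yields the claim.

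\medskip

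\noindent The one step requiring genuine care is subadditivity: the rest are essentially definitional. The main obstacle is producing an admissible density for the union whose energy is controlled, and the key insight is that the pointwise supremum $\sup_i\rho_i$ works because admissibility is preserved under increasing a density, while the inequality $\bigl(\sup_i a_i\bigr)^2\le\sum_i a_i^2$ for nonnegative $a_i$ lets the energies add up. (One should note this uses the squares being nonnegative and the exponent $2\ge 1$; the same argument gives $p$-subadditivity verbatim.)
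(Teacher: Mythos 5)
Your proof is correct and follows essentially the same route as the paper's: the first three items are handled identically (empty admissible set, vacuous admissibility, and $A(\Ga_2)\subset A(\Ga_1)$), and for countable subadditivity you pick near-optimal $\rho_i\in A(\Ga_i)$ and merge them into one admissible density for the union, exactly as the paper does. The only difference is cosmetic: the paper combines via $\rho=\bigl(\sum_{i=1}^\infty\rho_i^2\bigr)^{1/2}$, which turns the energy bound into the exact identity $\cE(\rho)=\sum_{i=1}^\infty\cE(\rho_i)$, whereas your pointwise supremum $\sup_i\rho_i$ yields the same conclusion through the inequality $\bigl(\sup_i\rho_i(e)\bigr)^2\le\sum_{i=1}^\infty\rho_i(e)^2$.
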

\begin{proof}
If $\Ga$ contains a constant walk, then no density can be admissible, therefore $A(\Ga)=\emptyset$ and $\Mod(\Ga)=\infty$. On the other hand, if $\Ga=\emptyset$ then every density is admissible, so $\Mod(\Ga)=0$. Now, assume $\Ga_1\subset\Ga_2$. Then $A(\Gamma_2)\subset A(\Ga_1)$, so  $\Mod(\Gamma_{1}) \le \Mod(\Gamma_{2})$.
Finally, assume $\Ga_i$ are non-empty families of non-constant walks. For every
$\Gamma_{i}$ pick a density $\rho_{i}\in A(\Ga_i)$ such that $\cE(\rho_{i})\leq \Mod(\Gamma_{i}) + 2^{-i}\epsilon$. Now let $\rho\defeq (\sum_{i=1}^{\infty} \rho_i^2)^{1/2}$. Then $\rho$ is admissible for $\Gamma \defeq \cup_{i = 1}^{\infty} \Gamma_{i}$ because if $\gamma \in \Ga$, then $\gamma \in \Gamma_{i}$ for some $i$ and $\rho \ge \rho_{i}$. Therefore, 
\begin{align*}
\Mod\left(\cup_{i=1}^\infty\Ga_i\right)&\leq \cE(\rho) =\sum_{e\in E}\rho(e)^2= \sum_{e \in E} \sum_{i=1}^{\infty} \rho_{i}(e)^2 =   \sum_{i=1}^{\infty}  \sum_{e \in E} \rho_{i}(e)^2\\ &=  \sum_{i=1}^{\infty} \cE(\rho_{i}) \le \sum_{i=1}^{\infty} \left(\Mod(\Gamma_{i}) + \frac{\epsilon}{2^{i}}\right) = \sum_{i=1}^{\infty} \Mod(\Gamma_{i}) + \epsilon.
\end{align*}
 Since $\epsilon$ is arbitrary, we conclude that $\Mod(\cup_{i} \Gamma_{i=1}^{\infty}) \le \sum_{i=1}^{\infty} \Mod(\Gamma_{i})$.
\end{proof}

\begin{example}[House graph]\label{ex:house}
Here we introduce a simple example of a graph and a family of walks connecting two nodes that we will examine more thoroughly in Section \ref{sec:becrit}.

\begin{figure}[H]
\includegraphics[scale=0.25]{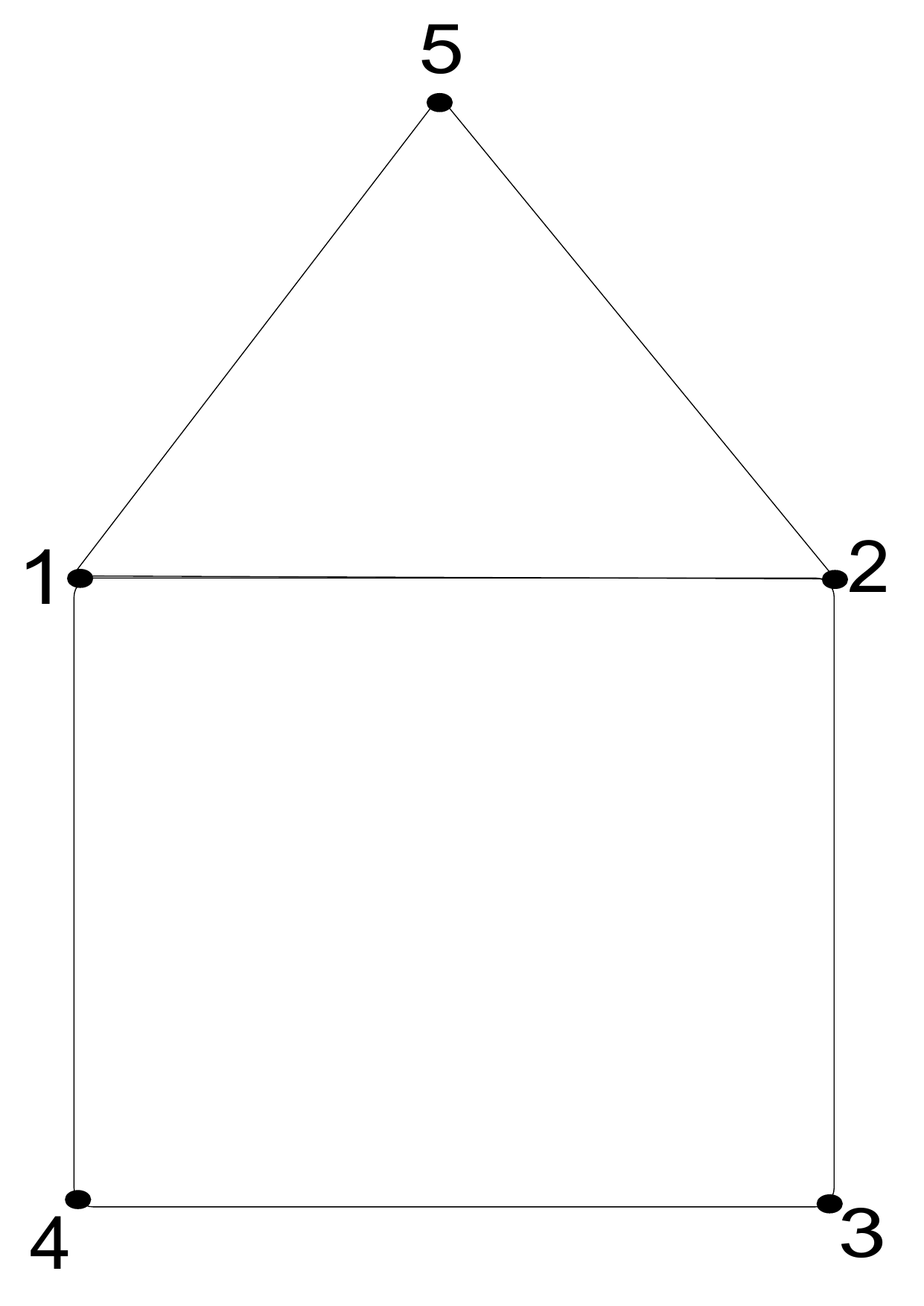}
\caption{House graph}\label{fig:house}
\end{figure}

For the graph in Figure \ref{fig:house}, we may define the family of walks 
\[
\Ga=\Gamma(1,2) \defeq \{\mbox{all walks from $1$ to $2$}\}.
\]   
In order to compute the modulus of this family we need to introduce some facts about extremal densities first.
\end{example}

\section{Beurling's criterion for extremality}\label{sec:becrit}
 
\begin{theorem}[Existence and uniqueness of extremal densities]\label{thm:existuniq}
Given a family of non-constant  walks $\Gamma$ in a finite graph $G$, consider its modulus $\Mod(\Gamma)$. Then
an extremal metric $\rho_0\in A(\Ga)$ exists and is unique.
\end{theorem}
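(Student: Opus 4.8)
The plan is to recognize this as a textbook problem in finite-dimensional convex optimization: minimizing the strictly convex, coercive functional $\cE(\rho)=\sum_{e\in E}\rho(e)^2=\|\rho\|^2$ over the admissible set $A(\Ga)$, viewed as a subset of $\R^M$ where $M=|E|$. The first step is to record the geometry of $A(\Ga)$. For each fixed walk $\gamma\in\Ga$, the map $\rho\mapsto\ell_\rho(\gamma)$ is linear in $\rho$, so the constraint $\ell_\rho(\gamma)\ge 1$ cuts out a closed half-space; together with the closed nonnegative orthant $[0,\infty)^M$, the set $A(\Ga)$ is an intersection of closed convex sets and is therefore itself closed and convex (this holds even when $\Ga$ is infinite, so no appeal to finiteness of the family is needed). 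I would also note at the outset that $A(\Ga)$ is nonempty whenever the walks are non-constant: the density $\rho\equiv 1$ satisfies $\ell_\rho(\gamma)=n\ge 1$ for any walk with $n\ge 1$ hops, so in particular $\Mod(\Ga)<\infty$ and the degenerate case $A(\Ga)=\emptyset$ cannot occur.

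For existence I would use the direct method. Choose a minimizing sequence $\rho_k\in A(\Ga)$ with $\cE(\rho_k)\to\Mod(\Ga)$. Since $\cE(\rho_k)$ is bounded, the Euclidean norms $\|\rho_k\|$ are bounded, so the sequence lies in a compact ball of $\R^M$; by Bolzano--Weierstrass some subsequence converges to a limit $\rho_0$. Closedness of $A(\Ga)$ gives $\rho_0\in A(\Ga)$, and continuity of $\cE$ gives $\cE(\rho_0)=\Mod(\Ga)$, so $\rho_0$ is extremal. The only ingredients here are coercivity of $\cE$ and finite dimensionality, both of which are automatic.

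For uniqueness I would invoke strict convexity of the squared norm, conveniently packaged in the parallelogram law. If $\rho_0,\rho_1$ are both extremal, then convexity of $A(\Ga)$ puts the midpoint $\tfrac12(\rho_0+\rho_1)$ back in $A(\Ga)$, and the identity $\cE\!\left(\tfrac{\rho_0+\rho_1}{2}\right)=\tfrac12\cE(\rho_0)+\tfrac12\cE(\rho_1)-\bigl\|\tfrac{\rho_0-\rho_1}{2}\bigr\|^2$ shows that if $\rho_0\ne\rho_1$ the midpoint has energy strictly below $\Mod(\Ga)$, contradicting minimality. Hence $\rho_0=\rho_1$.

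I do not expect a serious obstacle: the argument is the classical ``projection onto a closed convex set'' theorem in disguise. The only points demanding care are the bookkeeping ones --- verifying that the constraints really are linear (so that $A(\Ga)$ is convex) and that closedness survives an infinite family $\Ga$ --- together with confirming nonemptiness of $A(\Ga)$ so that $\Mod(\Ga)$ is finite and the minimization is nondegenerate. The subtle conceptual point worth flagging is that none of this requires reducing $\Ga$ to a finite subfamily; that reduction, proved elsewhere in the paper, is what later turns the problem into a genuine finite convex program amenable to the KKT/Beurling analysis, but it is not needed for mere existence and uniqueness.
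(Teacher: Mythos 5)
Your proposal is correct and takes essentially the same route as the paper: the paper also treats the problem as minimization of the strictly convex energy over the closed convex set $A(\Gamma)\subset\R^{|E|}$ (an intersection of closed half-spaces) and invokes compactness and strict convexity of Euclidean balls for existence and uniqueness, without any reduction to a finite subfamily. You have simply filled in the details the paper leaves implicit (nonemptiness of $A(\Gamma)$ via $\rho\equiv 1$, the minimizing-sequence argument, and the parallelogram identity for uniqueness).
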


We defer the proof of Theorem \ref{thm:existuniq} to Section \ref{sec:kkt}.

\begin{theorem}[Beurling's Extremality Criterion]
\label{thm:beurling}
A density $\rho\in A(\Ga)$ is extremal, if there is $\tilde{\Ga}\subset\Ga$ with $\ell_\rho(\ga)=1$ for all $\ga\in \tilde{\Ga}$ such that: 
\begin{equation}
\label{eq:beurling}
\mbox{$\sum_{e\in E}h(e)\rho(e)\geq 0$ whenever $h:E\rightarrow\R$ with $\ell_h(\ga)\geq 0$ for all $\ga\in\tilde{\Ga}$.}
\end{equation} 
\end{theorem}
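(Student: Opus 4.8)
The plan is to run the standard variational argument: I will show that the stated condition forces $\rho$ to minimize the energy by comparing it against an \emph{arbitrary} competitor in $A(\Ga)$ and then exploiting that $\cE$ is the square of an $L^2$ norm, so that Cauchy--Schwarz closes the estimate. The whole proof hinges on two facts: that $\mu\mapsto\ell_\mu(\ga)$ is linear in the density $\mu$, and that the difference of two admissible densities is exactly the kind of test function $h$ to which \eqref{eq:beurling} applies.

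First I would fix an arbitrary admissible density $\rho'\in A(\Ga)$ and set $h\defeq\rho'-\rho$. By linearity of $\rho$-length in the density, for every $\ga\in\tilde\Ga$ we have $\ell_h(\ga)=\ell_{\rho'}(\ga)-\ell_\rho(\ga)$. Since $\tilde\Ga\subset\Ga$ and $\rho'$ is admissible, $\ell_{\rho'}(\ga)\ge 1$, while by hypothesis $\ell_\rho(\ga)=1$ on $\tilde\Ga$; hence $\ell_h(\ga)\ge 0$ for all $\ga\in\tilde\Ga$. This is precisely the condition required to invoke \eqref{eq:beurling}.

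Feeding this $h$ into \eqref{eq:beurling} gives $\sum_{e\in E}(\rho'(e)-\rho(e))\rho(e)\ge 0$, i.e. $\sum_{e\in E}\rho'(e)\rho(e)\ge\cE(\rho)$. Now Cauchy--Schwarz yields
$$\cE(\rho)\le\sum_{e\in E}\rho'(e)\rho(e)\le\Big(\sum_{e\in E}\rho'(e)^2\Big)^{1/2}\Big(\sum_{e\in E}\rho(e)^2\Big)^{1/2}=\cE(\rho')^{1/2}\cE(\rho)^{1/2}.$$
Since $\rho\in A(\Ga)$ is admissible against the non-trivial walks in $\tilde\Ga$ it cannot vanish identically, so $\cE(\rho)>0$; dividing by $\cE(\rho)^{1/2}$ and squaring gives $\cE(\rho)\le\cE(\rho')$. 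As $\rho'\in A(\Ga)$ was arbitrary, $\rho$ attains the infimum defining $\Mod(\Ga)$ and is therefore extremal.

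I do not expect a serious obstacle: once the test function is in place the conclusion is essentially a single application of Cauchy--Schwarz. The only step requiring genuine care is the first one---recognizing that $h=\rho'-\rho$ is a direction satisfying $\ell_h\ge 0$ on the tight subfamily $\tilde\Ga$, which is exactly what makes the hypothesis usable. A minor point to check is that $\cE(\rho)>0$, which follows from admissibility against a non-trivial walk; the degenerate possibility $\tilde\Ga=\emptyset$ does not really arise, since then \eqref{eq:beurling} applied to $h=-\rho$ would force $\rho\equiv 0$, contradicting $\rho\in A(\Ga)$ for a non-trivial family.
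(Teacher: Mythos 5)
Your proof is correct and is essentially identical to the paper's own argument: take an arbitrary admissible competitor, apply the hypothesis \eqref{eq:beurling} to $h=\rho'-\rho$, and close with Cauchy--Schwarz. The extra care you take with $\cE(\rho)>0$ and the $\tilde{\Ga}=\emptyset$ case is harmless but not needed, since $\cE(\rho)=0$ would make the desired inequality $\cE(\rho)\le\cE(\rho')$ trivial anyway.
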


The proof is very simple and well-known, so we reproduce it here for completeness.

\begin{proof}[Proof of Theorem \ref{thm:beurling}]
Let $\sigma$ be an arbitrary admissible metric for $\Ga$. Let $h \defeq \sigma - \rho$. If $\gamma \in \tilde{\Gamma}$, then 
\[
\ell_h(\ga)=\ell_\sigma(\ga)-\ell_\rho(\ga)\geq 1-1=0.
\]
By assumption (\ref{eq:beurling}), this implies  $\sum_{e\in E} h(e)\rho (e) \ge 0$. Hence, $\sum \sigma(e)\rho(e) - \sum \rho(e)^2 \ge 0$. So by the Cauchy-Schwarz inequality:
\[
\cE(\rho) =\sum_{e\in E} \rho(e)^2 \le \sum_{e\in E} \sigma(e)\rho(e) \le \sqrt{\sum_{e\in E} \sigma(e)^2} \sqrt{\sum_{e\in E}\rho(e)^2}.
\]
Dividing both sides $\sqrt{\cE(\rho)}$ and squaring we get that $\cE(\rho) \le \cE(\sigma)$.
\end{proof}

\begin{definition}
If $\tilde{\Ga}\subset\Ga$ is a subfamily as in Beurling's Criterion (Theorem \ref{thm:beurling}), we say that $\tilde{\Ga}$ is a {\sf Beurling subfamily}.
\end{definition}

\begin{theorem}[Converse of Beurling's Criterion]\label{thm:conversebeurling}
 If  $\rho\in A(\Ga)$ is extremal, then $$\Ga_0(\rho)\defeq\{\ga\in\Ga: \ell_\rho(\ga)=1\}$$ is a Beurling subfamily.
\end{theorem}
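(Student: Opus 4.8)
The definition of a Beurling subfamily has two parts, and with $\tilde\Ga=\Ga_0(\rho)$ the first part — that $\ell_\rho(\ga)=1$ for every $\ga\in\tilde\Ga$ — holds by the very definition of $\Ga_0(\rho)$. So the entire content of the statement is to verify that $\Ga_0(\rho)$ satisfies the inequality~(\ref{eq:beurling}). This is precisely the stationarity/complementary-slackness half of the KKT picture, and the plan is to extract it from the extremality of $\rho$ by a one-sided variational (perturbation) argument.

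Concretely, I would fix an arbitrary $h:E\to\R$ with $\ell_h(\ga)\ge 0$ for all $\ga\in\Ga_0(\rho)$; the goal is $\sum_{e\in E}h(e)\rho(e)\ge 0$. Consider the perturbed density $\rho_t\defeq\rho+th$ for small $t>0$. Granting for the moment that $\rho_t\in A(\Ga)$ on some interval $t\in[0,t_0)$, extremality of $\rho$ gives $\cE(\rho)\le\cE(\rho_t)$, and expanding
\[
\cE(\rho_t)=\cE(\rho)+2t\sum_{e\in E}h(e)\rho(e)+t^2\cE(h)
\]
yields $2t\sum_{e\in E}h(e)\rho(e)+t^2\cE(h)\ge 0$; dividing by $t$ and letting $t\to 0^+$ produces $\sum_{e\in E}h(e)\rho(e)\ge 0$, as desired.

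The real work — and the main obstacle — is justifying that $\rho_t$ is admissible for all small $t>0$, which has two aspects. First, $\rho_t$ must stay a density, i.e.\ nonnegative, and this can fail on edges where $\rho(e)=0$ while $h(e)<0$. I would dispose of this at the outset by replacing $h$ with $\bar h$, where $\bar h(e)=h(e)$ when $\rho(e)>0$ and $\bar h(e)=\max(h(e),0)$ when $\rho(e)=0$. Since $\bar h\ge h$ pointwise, the hypothesis $\ell_{\bar h}(\ga)\ge 0$ still holds on $\Ga_0(\rho)$, while $\sum_{e\in E}\bar h(e)\rho(e)=\sum_{e\in E}h(e)\rho(e)$ because the two differ only where $\rho$ vanishes; hence it suffices to prove the inequality for $\bar h$, and now $\rho_t=\rho+t\bar h\ge 0$ automatically. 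Second, I must guarantee $\ell_{\rho_t}(\ga)\ge 1$ for every $\ga\in\Ga$. For $\ga\in\Ga_0(\rho)$ this is free, since $\ell_{\rho_t}(\ga)=1+t\ell_{\bar h}(\ga)\ge 1$. For the remaining ``slack'' walks, where $\ell_\rho(\ga)>1$, I need a uniform gap that dominates the first-order change $t\ell_{\bar h}(\ga)$. The cleanest route is to invoke the reduction (promised in the Introduction) that modulus is unchanged when $\Ga$ is replaced by a finite subfamily: for finitely many slack walks, $\min_\ga(\ell_\rho(\ga)-1)>0$ is a genuine positive gap and $|\ell_{\bar h}(\ga)|$ is bounded, so $\ell_{\rho_t}(\ga)\ge 1$ for all small $t$. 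Alternatively one can argue directly even for infinite $\Ga$: with $c\defeq\min_{\rho(e)>0}\rho(e)>0$ and $C\defeq\max_e|\bar h(e)|$, classify each slack walk by its number $m$ of hops across the support of $\rho$; walks with large $m$ satisfy $\ell_\rho(\ga)\ge mc$, which for $t\le c/(2C)$ absorbs $t\ell_{\bar h}(\ga)\ge -tmC$, while walks with small $m$ realize only finitely many possible $\rho$-lengths and hence again have a uniform gap above $1$ (here $\bar h\ge 0$ on the zero-set of $\rho$ is exactly what prevents long detours through $\rho$-null edges from spoiling the estimate). Either way $\rho_t\in A(\Ga)$ for $t\in[0,t_0)$, which closes the perturbation argument and shows $\Ga_0(\rho)$ is a Beurling subfamily.
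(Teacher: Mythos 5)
Your proof is correct, but it follows a genuinely different route from the paper's. The paper defers this theorem to Section~\ref{sec:kkt} and obtains it from convex duality: after reducing to a finite essential subfamily (Theorem~\ref{thm:essential-subfamily}) and dropping the nonnegativity constraint, it checks Slater's condition, invokes Rockafellar's strong-duality theorem to produce a saddle point $(\rho^*,\lambda^*)$ of the Lagrangian, and then shows that the walks carrying positive multipliers, $\{\gamma_i:\lambda_i^*>0\}$ --- which lie in $\Ga_0(\rho^*)$ by complementary slackness --- already satisfy~(\ref{eq:beurling}); since enlarging such a subfamily within $\Ga_0(\rho^*)$ only strengthens the hypothesis on $h$, all of $\Ga_0(\rho^*)$ is then Beurling as well. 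The key structural difference is where admissibility is handled: in the paper the constraints are absorbed into the Lagrangian, so the perturbation $\rho^*+\mu h$ is unconstrained and needs no feasibility check, whereas you keep the constrained problem and must show that $\rho+t\bar h$ remains admissible for small $t>0$ --- which is exactly what your truncation $\bar h$ (for nonnegativity) and your uniform-gap argument (for the slack walks) provide. Both of your routes for the gap are sound: route (a) borrows the same finite-reduction theorem the paper uses, while route (b) is self-contained and handles infinite $\Ga$ directly, correctly exploiting that after truncation the hops on $\rho$-null edges contribute nonnegatively and that short supported portions realize only finitely many $\rho$-lengths. What each approach buys: yours is elementary (no multipliers, no Slater, no appeal to \cite{Rockafellar}), and variant (b) dispenses even with Theorem~\ref{thm:essential-subfamily}; the paper's KKT route, at the price of more machinery, identifies a typically much smaller Beurling subfamily --- the positive-multiplier walks --- which is the interpretation that drives Algorithm~\ref{alg:mod}. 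One small imprecision: $\rho+t\bar h\ge 0$ is not ``automatic'' for all $t>0$ on edges where $\rho(e)>0$ and $h(e)<0$; it holds only for $t$ sufficiently small, which is all your argument requires.
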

We defer the proof of Theorem  \ref{thm:conversebeurling} to Section \ref{sec:kkt}.

\begin{corollary}[Beurling Subfamilies]\label{cor:subfamily}
If $\tilde{\Ga}\subset\Ga$ is a Beurling subfamily, then $$\Mod(\Ga)=\Mod(\tilde{\Ga}).$$
\end{corollary}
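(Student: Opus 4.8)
The plan is to exhibit a single density that is simultaneously extremal for both $\Ga$ and $\tilde{\Ga}$, and then read off the equality of the two moduli from the energy of that density. First I would unpack the hypothesis: saying that $\tilde{\Ga}$ is a Beurling subfamily of $\Ga$ means precisely that there is a witness density $\rho\in A(\Ga)$ satisfying the hypotheses of Theorem \ref{thm:beurling}, namely $\ell_\rho(\ga)=1$ for every $\ga\in\tilde{\Ga}$ together with the sign condition (\ref{eq:beurling}). Applying Theorem \ref{thm:beurling} to the family $\Ga$ then gives at once that $\rho$ is extremal for $\Ga$, so that $\cE(\rho)=\Mod(\Ga)$.

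The key observation is that the very same pair $(\rho,\tilde{\Ga})$ is also a valid Beurling certificate for the smaller family $\tilde{\Ga}$ itself. Indeed, since $\tilde{\Ga}\subset\Ga$, admissibility for $\Ga$ implies admissibility for $\tilde{\Ga}$, so $\rho\in A(\tilde{\Ga})$; the equalities $\ell_\rho(\ga)=1$ on $\tilde{\Ga}$ are unchanged; and the sign condition (\ref{eq:beurling}) refers only to walks in $\tilde{\Ga}$, hence is identical whether we regard $\tilde{\Ga}$ as a subfamily of $\Ga$ or of itself. I would therefore apply Theorem \ref{thm:beurling} a second time, now with ambient family $\tilde{\Ga}$ and Beurling subfamily $\tilde{\Ga}$, to conclude that $\rho$ is extremal for $\tilde{\Ga}$, i.e. $\cE(\rho)=\Mod(\tilde{\Ga})$.

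Combining the two applications yields $\Mod(\Ga)=\cE(\rho)=\Mod(\tilde{\Ga})$, which is the claim. As a sanity check one may note that Monotonicity from Proposition \ref{prop:basic} already gives $\Mod(\tilde{\Ga})\le\Mod(\Ga)$, so only the reverse inequality carries content, and that is exactly what the second application of Beurling's criterion supplies. There is no serious obstacle here; the single point requiring care is the elementary verification that the certificate survives the passage from $\Ga$ to $\tilde{\Ga}$ — in particular that $\rho$ remains admissible — and this is immediate because the admissibility constraints defining $A(\tilde{\Ga})$ form a subset of those defining $A(\Ga)$.
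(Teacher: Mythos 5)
Your proof is correct. The key verification goes through: since $A(\Ga)\subset A(\tilde{\Ga})$, the witness density $\rho$ is admissible for $\tilde{\Ga}$; the equalities $\ell_\rho(\ga)=1$ on $\tilde{\Ga}$ and the sign condition (\ref{eq:beurling}) involve only walks of $\tilde{\Ga}$, so the pair $(\rho,\tilde{\Ga})$ is a Beurling certificate for $\tilde{\Ga}$ viewed as its own ambient family, and two applications of Theorem \ref{thm:beurling} give $\Mod(\Ga)=\cE(\rho)=\Mod(\tilde{\Ga})$.

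Your route is structurally different from the paper's, though the underlying mathematics is the same. The paper splits the equality into two inequalities: monotonicity (Proposition \ref{prop:basic}) gives $\Mod(\tilde{\Ga})\le\Mod(\Ga)$, and for the reverse it takes an arbitrary $\rho\in A(\tilde{\Ga})$, sets $h=\rho-\rho_0$ where $\rho_0$ is the extremal density of $\Ga$, and runs the sign condition together with Cauchy--Schwarz to obtain $\cE(\rho)\ge\cE(\rho_0)=\Mod(\Ga)$. That inline computation is exactly the proof of Theorem \ref{thm:beurling} re-executed with $\tilde{\Ga}$ as ambient family; you noticed this and simply cite the theorem a second time instead of re-deriving it. Your version is more modular: it works directly with the witness density from the definition of Beurling subfamily, so it never needs to invoke the extremal density $\rho_0$ of $\Ga$ (the paper's phrasing implicitly leans on Theorem \ref{thm:beurling} plus the uniqueness statement of Theorem \ref{thm:existuniq} to know that $\rho_0$ carries the certificate), and it delivers both inequalities at once, making monotonicity the sanity check you describe rather than half the argument. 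What the paper's version buys in exchange is explicitness: the reader sees precisely where Cauchy--Schwarz enters, and the corollary's proof is self-contained rather than resting on two invocations of the criterion.
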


\begin{proof}[Proof of Corollary \ref{cor:subfamily}]
Since $\tilde{\Ga}$ is a Beurling subfamily, letting $\rho_0\in A(\Ga)$ be the extremal density for $\Ga$, we see that $\tilde{\Ga}\subset \Ga_0(\rho_0)$ and (\ref{eq:beurling}) holds for $\rho_0$.

By monotonicity, $\Mod(\tilde{\Ga})\leq \Mod(\Ga)$. Conversely, let  $\rho\in A(\tilde{\Ga})$. Define $h:E\rightarrow\R$ as $h=\rho-\rho_0$. Then, for every $\ga\in\tilde{\Ga}$ we have $\ell_h(\ga)=\ell_\rho(\ga)-\ell_{\rho_0}(\ga)\geq 1-1=0$. So by (\ref{eq:beurling}), we have $\sum_{e\in E} h(e)\rho_0(e)\geq 0$. This implies
$\sum_{e\in E}\rho(e)\rho_0(e)\geq \cE(\rho_0)$ and by Cauchy-Schwarz we see that $\cE(\rho)\geq \cE(\rho_0)=\Mod(\Ga)$. Minimizing over all $\rho\in A(\tilde{\Ga})$ we find that $\Mod(\tilde{\Ga})\geq \Mod(\Ga)$.
\end{proof}

Let's revisit Example \ref{ex:house}.
\begin{example}[House graph continued]\label{ex:housecont}
Define $\rho_0$ as in Figure \ref{fig:rhodens}: 
\begin{figure}[H]
\includegraphics[scale=0.25]{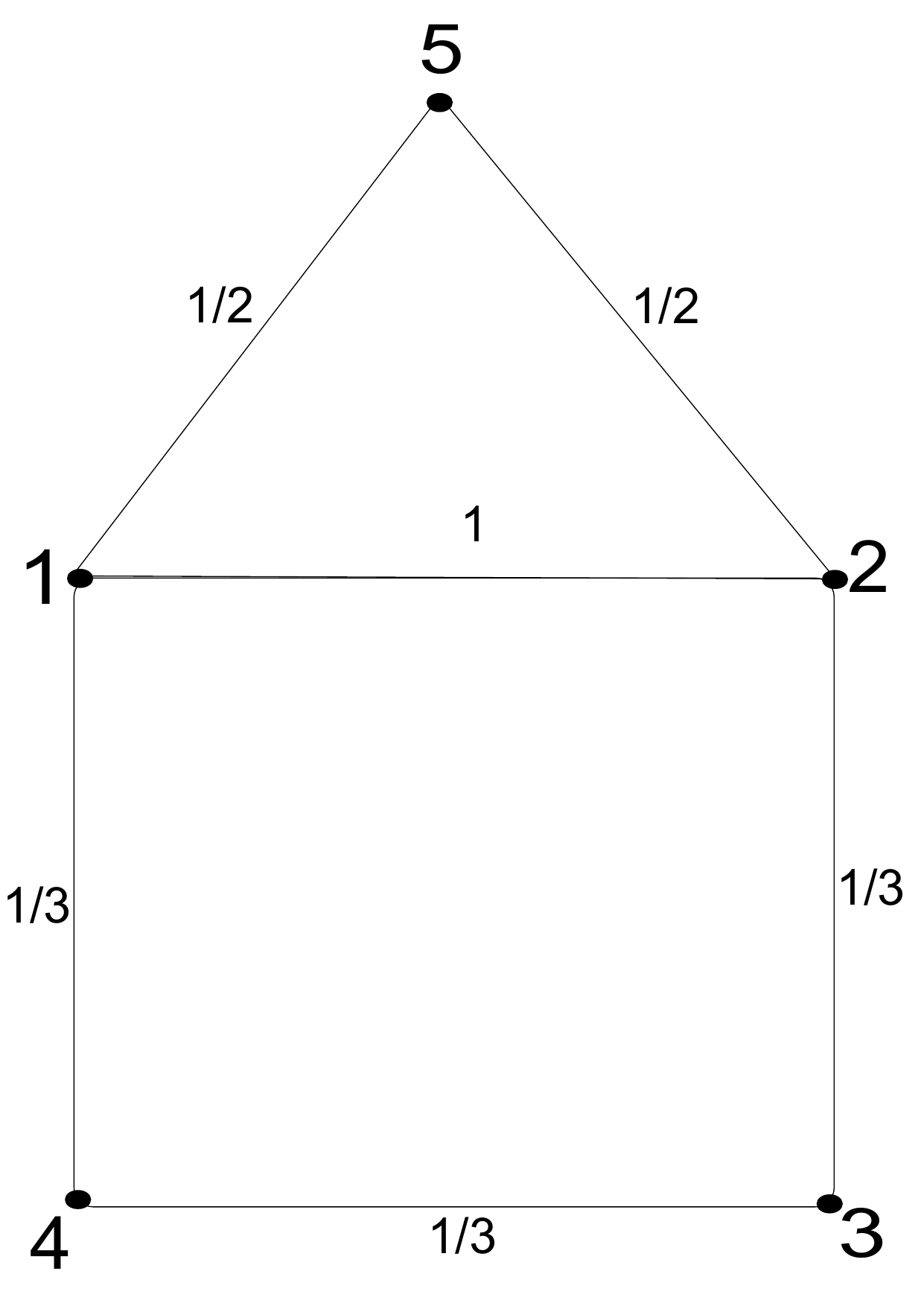}
\caption{The density $\rho_0$}\label{fig:rhodens}
\end{figure}
We can test $\rho_0$ using Beurling's Criterion. Consider the paths $P_{1} = (1,2), P_{2} = (1,5,2), P_{3} = (1,4,3,2)$, and let $\tilde{\Gamma} \defeq \{ P_1, P_2, P_3\}$. Given a function $h:E\rightarrow \R$, if $e=\{x,y\}$, we write $h(e)$ as $h(x,y)$. Suppose $h$ is such that $\ell_h(P_1)\ge 0$. That means $h(1,2) \ge 0$ and therefore  $h(1,2)\rho_{0}(1,2) \ge 0$. Similarly,  if $\ell_h(P_2) \ge 0$, then  $h(1,5) + h(5,2) \ge 0$. So $h(1,5)\rho_{0}(1,5) + h(5,2)\rho_{0}(5,2) = (1/2)\ell_h(P_2) \ge 0$. Finally, if $\ell_h(P_3) \ge 0$, then $\sum_{e\in P_3} h(e)\rho_{0}(e) = (1/3)\ell_h(P_3) \ge 0$. Therefore, since $E=E(P_1)\cup E(P_2)\cup E(P_3)$, by Beurling's Criterion, we know that $\rho_{0}$ is extremal for $\Ga$. Hence, 
\[
\Mod(\Ga)=\cE(\rho_{0}) = 1 + 1/4 + 1/4 + 1/9 +1/9+ 1/9 = 1.8\overline{3}.
\] 
\end{example}

\section{General properties for modulus of walk families}

In this section we review some standard properties of modulus.

\begin{definition}\label{def:curve-partial-ordering}
 Given two walks $\gamma_1$ and
$\gamma_2$ on a graph $G=(V,E)$, we say that
\begin{equation}
  \label{eq:curve-partial-ordering}
  \gamma_1\preceq \gamma_2 \qquad\iff\qquad
  \ell_\rho(\gamma_1) \le \ell_\rho(\gamma_2)\qquad\forall\rho:E\mapsto[0,\infty),
\end{equation}
with strict inequality $\gamma_1\prec\gamma_2$ if  the
inequality on the right side is strict for some $\rho$.
\end{definition}

\subsection{Shorter Walks:}\label{ssec:shorterwalk}{\em  Suppose $\Gamma_1$ and $\Gamma_2$ are two walk families.  If for every $\gamma \in \Gamma_2$ there exists  $\sigma \in \Gamma_1$ such that $ \sigma \preceq \gamma$ then $\Mod(\Gamma_2) \le \Mod(\Gamma_1)$.}

\begin{proof}
Recall that $\sigma\preceq \gamma$ means that $\ell_\rho(\sigma)\leq \ell_\rho(\gamma)$ for every density $\rho$.
Therefore, if $\rho \in A(\Gamma_1)$, then $\rho \in A(\Gamma_2)$. So $\min_{\rho \in A(\Gamma_1)} \cE (\rho) \ge \min_{\rho \in A(\Gamma_2)} \cE (\rho)$.
\end{proof}
\subsection{Basic Estimate} {\em Assume $\Gamma$ is a walk family such that $\length_G(\gamma) \ge L\  >  0$,  $\forall \gamma \in \Gamma$. Then, $\Mod(\Gamma) \le \frac{|E|}{L^{2}}$.}
\begin{definition}\label{def:graphlength}
Here we write $\length_G(\ga)$ to mean the number of hops that $\ga$ takes.
\end{definition}
\begin{proof}
Define $\rho \equiv \frac{1}{L}$. Then, since $\min_{\gamma \in \Gamma}\length_G(\gamma) \ge L$, we have $\rho  \in A(\Gamma)$. Thus, by definition of modulus, $\Mod(\Gamma) \le \cE (\rho) = \frac{|E|}{L^2}$.
 \end{proof} 

\subsection{Symmetry Rule} {\em Suppose $T: V \rightarrow V$ is a bijection and a graph isomorphism, that is, $(v_1, v_2)$  $\in E \iff (T(v_1),T(v_2)) \in E$, such that $T^2$ = I. Also assume $\Gamma$ is a family of walks that is $T$-invariant, namely, such that $T(\Gamma) = \Gamma$. Consider the set of $T$-invariant densities $\tilde{A}(\Gamma) \defeq \{ \rho \in A(\Gamma) | \rho = \rho \circ T \}$. Then: $$\Mod(\Gamma) = \inf_{\rho \in \tilde{A}(\Gamma)} \cE (\rho)$$ 
}
This says that in the presence of symmetry one can restrict to symmetric densities.
\begin{proof}
Let $\rho \in A(\Gamma).$ Define $\rho_1 \defeq \rho \circ T$. Then, $\ell_{\rho_{1}}(T\circ\gamma) =  \ell_{\rho}(\gamma)$ and since $T(\Gamma)=\Gamma$, we also have $\rho_1\in A(\Gamma)$. Moreover $\cE (\rho_1) = \cE (\rho)$. Now, if $\rho_2 \defeq \frac{\rho + \rho_1}{2}$ then $\rho_2 \in A(\Gamma)$ and using the fact that  $T$ is an involution, $\rho_2 = \frac {\rho_1 + \rho}{2} = (\frac {\rho + \rho_1}{2})\circ T = \rho_2 \circ T$, so $\rho_2 \in \tilde{A}(\Gamma)$. Moreover, $\cE (\rho_2) = \frac{1}{4} \cE (\rho) + \frac{1}{4} \cE (\rho_1) + \frac{1}{2} \sqrt{\cE (\rho)} \sqrt{\cE (\rho_1)} \le \frac{1}{2}(\cE (\rho) + \cE (\rho_1)).$ Thus, $\forall \rho \in \Gamma, \exists \tilde{\rho} \in \tilde{A}(\Gamma)$ such that $\cE (\tilde{\rho}) \le \cE (\rho)$.
\end{proof}

\subsection{Connecting Families}
 \begin{definition}[Connecting Families]\label{def:conncf}
 If $A$ and $B$ are subsets of the vertex-set in a graph  $H$, let $$\Gamma(A,B;H) \defeq \{\mbox{all walks in } H | \gamma: A \leadsto B \}.$$ The notation $\gamma: A \leadsto B$ means that the walk starts on $A$ and ends on $B$. Also a subset 
 $C \subset V(H)$ is a {\sf cut} for $\Gamma(A,B;H)$ if $\forall \gamma \in \Gamma(A,B;H)$, $V(\gamma) \cap C \neq \emptyset$.
 \end{definition}
%Notice that $\Gamma(A,B;H)=\Gamma(B,A;H)$, simply by reversing every connecting walk.

\subsection{Extension Rules}{\em Let $\Gamma = \Gamma(A,B;H)$ be a connecting family of walks.  The following holds. 
\begin{itemize}
\item[{\rm (i)}] If  $B\subset B'$, then $\Mod(\Gamma(A,B;H)) \le \Mod(\Gamma(A,B';H))$.

\item[{\rm (ii)}] If $C$ is a cut for $\Gamma$, then $\Mod(\Gamma(A,B;H)) \le \Mod(\Gamma(A,C;H))$.

\item[{\rm (iii)}] If  $H \subset H'$, then $\Mod(\Gamma(A,B;H)) \le \Mod(\Gamma(A,B;H'))$.
\end{itemize} }
\begin{proof} 
{\rm (i)} Let $\Gamma'=\Gamma(A,B';H)$. If $\rho \in A(\Gamma')$, then $\rho \in A(\Gamma)$. So $\min_{\rho \in A(\Gamma')} \cE (\rho) \ge \min_{\rho \in A(\Gamma)} \cE (\rho)$.

{\rm (ii)} Note that for every $\gamma \in \Gamma$, there exists $\sigma \in \Gamma(A,C;H)$ such that $\sigma \preceq \gamma$, now apply the Shorter Walk property \ref{ssec:shorterwalk}.

{\rm (iii)} Observe that, if $\Gamma' \defeq \Gamma(A,B;H')$, then $\Gamma \subset \Gamma'$ and the claim follows by the Monotonicity property, see Proposition \ref{prop:basic}.
\end{proof}

\subsection{Parallel Rule}{\em
Suppose $\Gamma = \Gamma_{1}(A_1,B_1;H) \cup \Gamma_{2}(A_2,B_2;H)$ and no trace of a walk in $\Gamma_1$ ever crosses a trace of a walk in $\Gamma_2$, that is, $V(\gamma_1)\cap V(\gamma_2)=\emptyset$ for every $\gamma_1\in \Gamma_1$ and $\gamma_2\in \Gamma_2$. Then $$\Mod(\Gamma) = \Mod(\Gamma_1) + \Mod(\Gamma_2).$$
}
\begin{proof} 
Observe that $\Mod(\Gamma) \le \Mod(\Gamma_1) + \Mod(\Gamma_2)$ by subadditivity (Proposition \ref{prop:basic}).

On the other hand, for $i=1,2$, let $E_i$ consists of all the edges traversed by walks in $\Gamma_i$. Then $E_1\cap E_2=\emptyset$. So
given $\rho \in A(\Gamma)$, define $\rho_i=\rho|_{E_i}$. Then $\rho_i \in A(\Gamma_i)$, and $\cE(\rho)\geq \cE(\rho_1)+\cE(\rho_2)$. Taking the infimum of both sides, we get $\min_{\rho \in A(\Gamma)} \cE (\rho) \ge \min_{\rho_1 \in A(\Gamma_{1})} \cE (\rho_1) + \min_{\rho_2 \in A(\Gamma_{2})} \cE (\rho_2)$. Therefore, $\Mod(\Gamma) \ge \Mod(\Gamma_1) + \Mod(\Gamma_2)$  and we have we reach the desired conclusion.
\end{proof}

\subsection{Serial Rule}{\em  Let $C$ be a cut for $\Gamma\defeq\Gamma(A_1,A_2;H)$, such that $C\cap A_j=\emptyset$, for $j=1,2$. Define $\Gamma_j = \Gamma(A_j,C;H)$, $j=1,2$. Then,  $$\frac{1}{\Mod(\Gamma)} \ge \frac{1}{\Mod(\Gamma_1)} + \frac{1}{\Mod(\Gamma_2)}.$$
}

\begin{proof}
First, define $V_1$ the be the set of vertices $v$ for which any walk $\gamma: v\leadsto A_2$ must necessarily cross $C$. 
Likewise, $V_2$ is the set of vertices $w$ for which any walk $\gamma: w\leadsto A_1$ must necessarily cross $C$. Note that $C\subset V_1\cap V_2$. Also $V=V_1\cup V_2$, because if one could walk from a vertex $v$ to $A_1$ and $A_2$ without crossing $C$, then $C$ wouldn't be a cut.

For $i=1,2$,  let $H_i$ be the subgraph of $H$ induced by $V_i$. Given $\tilde{\rho}_i \in A(\Gamma_i)$, set $\rho_{i} = \tilde{\rho}_i|_{E(H_i)}$. Moreover set $\rho_i(e)=0$ if $e$ connects two vertices in $C$, so that $\rho_1$ and $\rho_2$ have disjoint supports. Then $\rho_i \in A(\Gamma_i)$. Indeed, given a path $\gamma: A_1\leadsto C$, let $\tilde{\gamma}$ be the same path stopped the first time it visits $C$. Then $\tilde{\gamma}$ does not visit any vertices in $V_2$ or traverse any edge connecting two vertices in $C$. So $$\ell_{\rho_1}(\ga)\geq\ell_{\rho_1}(\tilde{\ga})=\ell_{\tilde{\rho}_1}(\tilde{\ga})\geq 1.$$

For some $a,b \in \R$ define: $$ \rho = 
\begin{cases} a \rho_1, & \text{ on } E(H_1) \\
		    b \rho_2 , & \text{ on } E(H_2).
\end{cases}$$

Now observe that for any $\gamma \in \Gamma$, there exist $\gamma_i \in \Gamma_i$, with $E(\gamma_i) \subset E(H_i)$ such that $\ga_i\preceq \ga$. Therefore, $\ell_{\rho}(\gamma) \ge \ell_{\rho}(\gamma_1) + \ell_{\rho}(\gamma_2) = a \ell_{\rho_1}(\gamma_1) + b \ell_{\rho_2}(\gamma_2) \ge a + b$. The last inequality is due to the fact that $\rho_i \in A(\Gamma_i) \text{ for } i \in \{1,2\} $. Now, choose $a,b$ such that $a + b = 1$. Then, $\rho \in A(\Gamma)$. Moreover, $\cE(\rho) = \sum_{e \in E} \rho(e)^{2} = a^2 \sum_{e \in E(H_1)} \rho_1(e)^2 + b^2 \sum_{e \in E(H_2)} \rho_2(e)^2$. For convenience, let
$\alpha =\sum_{e \in E(H_1)} \rho_1(e)^2$ and $ \beta = \sum_{e \in E(H_2)} \rho_2(e)^2$. Since $b = 1 - a$, we now have $\cE(\rho) = a^2(\alpha + \beta) - 2a\beta + \beta$. We want to
minimize $\cE(\rho)$ over the free variable $a$. Completing the squares yields 
\begin{align*}
\cE(\rho) 
%& = (\alpha + \beta) \left[ a^2 - 2a \frac{\beta}{\alpha+\beta}\right] + \beta \\
%& = (\alpha + \beta) \left[ a - \frac{\beta}{\alpha+\beta}\right]^2 +\frac{1}{\alpha + \beta}\left[\beta(\alpha + \beta) - \beta^2\right] \\
&  = \frac{1}{\alpha+\beta}\left[ ((\alpha+\beta)a - \beta)^2 + \alpha\beta\right]
\end{align*}
 With the goal of minimizing $\cE(\rho)$, since $((\alpha+\beta)a - \beta)^2$ is nonnegative, choose $a$ so this term becomes zero. Then $\cE(\rho) = \frac{\alpha\beta}{\alpha+\beta} = \frac{1}{\frac{1}{\alpha} + \frac{1}{\beta}}$. Since $\Mod(\Gamma) \le \cE(\rho)$, we find that $\frac{1}{\Mod(\Gamma)} \ge \frac{1}{\alpha} + \frac{1}{\beta}$. Since $\tilde{\rho}_i$  were arbitrary, letting them vary over $A(\Gamma_i)$, we reach our conclusion.
\end{proof}

\section{Capacity}

\subsection{Potentials and gradients}
Let $u:V \rightarrow \R$ be a function (or {\sf potential}). The {\sf gradient} of $u$ is a density $\rho_{u} : E \rightarrow [0,+\infty)$ defined on $e=\{x,y\}$ by the formula $$\rho_{u}(e) \defeq | u(x) - u(y) |.$$ 
We say  the pair $(A,B)$ forms a {\sf condenser} and define its {\sf capacity} as
\[
\capa(A,B) \defeq \inf_{u|_{A}=0,u|_{B}=1} \cE(\rho_u).
\]
A function $U:V \rightarrow \R$ that attains the infimum is called a {\sf capacitary function}. Such a function $U$ always exists and is unique. This is because $\cE (\rho_{_U}) = \frac{1}{2} \sum_{e \in E} \rho_{_U}(e)^2 = U^\top LU$ where $L$ is the combinatorial Laplacian. Since $L$ is symmetric, $U^\top LU$ is a quadratic form and the minimization can be handled by the method of Lagrange multipliers. Also  $U$ is harmonic, i.e. $LU=0$,  on $V \setminus (A\cup B)$. Therefore, uniqueness can be derived from the maximum principle. For more details see \cite{epcz}.

\begin{proposition}\label{prop:capamod}
We always have
$$ \capa(A,B) = \Mod(\Gamma(A,B)).$$
\end{proposition}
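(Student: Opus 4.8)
The plan is to prove the equality by establishing the two inequalities $\Mod(\Gamma(A,B)) \le \capa(A,B)$ and $\capa(A,B) \le \Mod(\Gamma(A,B))$ separately. The underlying idea in each direction is to manufacture an admissible competitor for one of the two variational problems out of a competitor for the other: a gradient density is admissible for the walk family, and conversely a shortest-$\rho$-distance function is a valid potential for the condenser.

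First I would handle the easy inequality $\Mod(\Gamma(A,B)) \le \capa(A,B)$. Given an arbitrary potential $u$ with $u|_A = 0$ and $u|_B = 1$, I claim its gradient $\rho_u$ is admissible for $\Gamma(A,B)$. Indeed, for any walk $\gamma = x_0\,e_1\,x_1\cdots e_n\,x_n$ with $x_0 \in A$ and $x_n \in B$, the triangle inequality together with telescoping gives
\[
\ell_{\rho_u}(\gamma) = \sum_{k=1}^n |u(x_{k-1}) - u(x_k)| \ge \left| u(x_n) - u(x_0) \right| = |1 - 0| = 1,
\]
so $\rho_u \in A(\Gamma(A,B))$ and hence $\Mod(\Gamma(A,B)) \le \cE(\rho_u)$. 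Taking the infimum over all admissible potentials $u$ yields the inequality. This direction is essentially just the definition of admissibility.

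The reverse inequality $\capa(A,B) \le \Mod(\Gamma(A,B))$ is the substantive part, and my approach is to convert a density into a potential via the shortest-path distance. Given any $\rho \in A(\Gamma(A,B))$, I would set $u_\rho(x) \defeq \min\{1,\ \dist_\rho(A,x)\}$, where $\dist_\rho(A,x)$ denotes the smallest $\rho$-length of a walk from $A$ to $x$; this is well-defined because $\rho \ge 0$. Then $u_\rho|_A = 0$, since the trivial walk at a vertex of $A$ has length $0$, and $u_\rho|_B = 1$, since admissibility forces $\dist_\rho(A,x) \ge 1$ for every $x \in B$. Thus $u_\rho$ is a legitimate competitor for the capacity. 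The key estimate is the pointwise bound $\rho_{u_\rho}(e) \le \rho(e)$ on every edge $e = \{x,y\}$, which then gives $\cE(\rho_{u_\rho}) \le \cE(\rho)$, hence $\capa(A,B) \le \cE(\rho)$; minimizing over $\rho \in A(\Gamma(A,B))$ finishes the argument.

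The hard part will be justifying this pointwise bound, but I expect it to reduce to two elementary observations. First, the distance is $1$-Lipschitz with respect to $\rho$ along edges, i.e. $|\dist_\rho(A,x) - \dist_\rho(A,y)| \le \rho(e)$ for $e = \{x,y\}$, which follows by prepending the edge $e$ to an optimal walk to one endpoint to reach the other. Second, truncation at $1$ can only contract differences, since $t \mapsto \min\{1,t\}$ is $1$-Lipschitz. Composing these gives $|u_\rho(x) - u_\rho(y)| \le \rho(e)$, i.e. $\rho_{u_\rho}(e) \le \rho(e)$, as needed. Everything else is bookkeeping, and combining the two inequalities yields $\capa(A,B) = \Mod(\Gamma(A,B))$.
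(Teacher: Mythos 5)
Your proposal is correct and follows essentially the same route as the paper: the gradient of a potential is admissible by telescoping, and conversely the truncated shortest-$\rho$-distance $\min\{1,\dist_\rho(A,\cdot)\}$ is a valid potential whose gradient is dominated edgewise by $\rho$. The only cosmetic differences are that the paper uses the capacitary function $U$ directly (rather than infimizing over arbitrary potentials) and verifies the truncation step by a three-case analysis instead of your cleaner observation that $t\mapsto\min\{1,t\}$ is $1$-Lipschitz.
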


\begin{proof} 
Suppose $U$ is the capacitary function for $(A,B)$, and let $\gamma \in \Gamma$ be a walk from $A$ to $B$. Assume that $\gamma$ visits the vertices $\{a=x_0,...,x_n=b\}$ where $\{x_i,x_{i+1}\} \in E, \forall i=0,...n-1$, so that $a\in A$ and $b\in B$. Then 
\begin{align*}
1 & \leq |U(a) - U(b)| =\left|\sum_{i=0}^{n-1} (U(x_{i+1}) - U(x_{i}))\right| \\
&\leq \sum_{i=0}^{n-1} |U(x_{i+1}) - U(x_{i})|  = \sum_{i=0}^{n-1} \rho_{ _{U}} (\{x_{i+1},x_{i}\}) = \ell_{\rho_{_U}}(\gamma).
\end{align*}
 Therefore $\rho_U \in A(\Gamma)$, and hence $\capa(A,B)\geq \Mod(\Gamma(A,B))$.

 Conversely, let $\rho \in A(\Gamma)$. Define $U(x) = \inf_{\gamma: A \leadsto x} \ell_{\rho}(\gamma)$. Note that: $U|_{A} \equiv 0$ and $U|_{B} \ge 1$. If $e = \{x,y\}$ then (i) $U(y) \le U(x) + \rho(e)$ and (ii) $U(x) \le U(y) + \rho(e)$. Together (i) and (ii) imply $|U(x) - U(y)| \le \rho(e)$. Therefore, $\rho_{_U} \le \rho$. At this point, if $U|_{B} \equiv 1$, the proof is complete. So, assume $U|_{B} > 1$. In this case we use truncation. Define $\nu(x) = \min \{U(x),1\}$. Given $e = \{x,y\}$ we have three cases:
\begin{itemize}
\item[{\rm Case 1:}] $U(x)$ and $U(y) < 1$. Then, $\rho_{\nu}(e) = \rho_{_U}(e)$. 
\item[{\rm Case 2:}] $U(x) \ge 1$ and $U(y) < 1$ Then, $\rho_{\nu}(e) \le \rho_{_U}(e)$. 
\item[{\rm Case 3:}] $U(x)$ and $U(y) \ge 1$. Then, $\rho_{\nu}(e) = 0 \le \rho_{_U}(e)$. 
\end{itemize}
 In each case, $\rho_{\nu}(e) \le \rho_{_U}(e)$. So $\capa(A,B)\leq \cE(\rho_\nu)\leq \cE(\rho)$ and minimizing over $\rho$ yields the claim.
\end{proof}

\subsection{Effective conductance}\label{ssec:effcond}

In the special case when $A=\{a\}$ and $B=\{b\}$, we write $\capa(a,b)=\Mod(a,b)$ in  Proposition \ref{prop:capamod}, 
and this is seen to also equal the effective conductance (the reciprocal of effective resistance) between $a$ and $b$, see \cite{duff} or  \cite[Theorem 6.3]{epcz} where this connection is described in detail. Moreover, effective resistance between pairs of points can be computed exactly by diagonalizing and inverting the Laplacian matrix. This is explained and reviewed  in Section 5 of  \cite{epcz}. From this point of view, modulus of families of walks is a far reaching generalization of effective conductance when interpreting a graph as an electrical network.

\section{Essential subfamilies and inequality conditions for admissibility}

In this section, we show that for any family of walks $\Gamma$ in a finite graph, there
exists a finite subfamily $\Gamma^*\subset\Gamma$
with the property that $A(\Gamma^*)=A(\Gamma)$.
This will be important in Section~\ref{sec:kkt}.
We call such finite subfamily $\Gamma^*$ an  {\sf essential subfamily}. This can be useful in
certain applications, since it implies that the condition $\rho\in
A(\Gamma)$ can always be replaced by a finite system of linear
inequalities: $\rho\ge 0$ and $\ell_\rho(\gamma)\ge 1$ for all
$\gamma\in\Gamma^*$.

The construction of $\Gamma^*$ can be understood through a partial
ordering $\preceq$ of walks defined in (\ref{eq:curve-partial-ordering}). 
Define the nonnegative edge {\sf multiplicity} of a family of walks $\Ga$ as
\begin{equation*}
  m:\Gamma\times E\to\mathbb{N}_0\qquad m(\gamma,e) = 
  \text{ the number of times $\gamma$ crosses edge $e$ }.
\end{equation*}

Choose an enumeration of the edges and define a 
mapping from $\Gamma$ into $\mathbb{N}_0^{|E|}\subset \R^{|E|}$ by associating to each walk its vector of multiplicities:
\begin{equation}
  \label{eq:curves-to-N}
  \gamma\mapsto x_\gamma := \left(m(\gamma,e_1),m(\gamma,e_2),\cdots,m(\gamma,e_{|E|})\right)
\end{equation}

Consider the natural partial ordering on $\R^{|E|}$ given by:
\begin{equation}
  \label{eq:partial-ordering}
  x \preceq y \qquad\iff\qquad x_i\le y_i\quad i=1,2,\ldots,|E|,
\end{equation}
and the associated strict inequality if additionally $x\ne y$. 

Then the mapping (\ref{eq:curves-to-N}) is order-preserving, that is to say:
\begin{equation}\label{eq:orderpreserve}
\gamma_1\preceq \gamma_2 \qquad\iff\qquad x_{\gamma_1}\preceq x_{\gamma_2}.
\end{equation}
To check this, note that letting $\rho_i=1$ on $e_i$ and $0$ otherwise, we get that $\ell_{\rho_i}(\ga)=m(\ga,e_i)$. So
$\gamma_1\preceq \gamma_2$ implies $x_{\gamma_1}\preceq x_{\gamma_2}$. The converse follows from the fact that
\begin{equation}\label{eq:ellrhom}
\ell_\rho(\ga)=\sum_{i=1}^{|E|} m(\ga,e_i)\rho(e_i).
\end{equation}

In
this section, we prove the following theorem.

\begin{theorem}
  \label{thm:essential-subfamily}
  Any family of walks $\Gamma$ admits an essential subfamily, that is,  a finite subfamily
  $\Gamma^*\subseteq\Gamma$ such that $A(\Gamma^*)=A(\Gamma)$.
\end{theorem}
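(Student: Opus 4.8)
The plan is to translate the problem into the language of the multiplicity vectors $x_\gamma\in\mathbb{N}_0^{|E|}$ and to reduce the admissibility constraints to those coming from the $\preceq$-minimal walks. The governing observation is the Shorter Walks principle of Section~\ref{ssec:shorterwalk} together with the order-preserving identity (\ref{eq:orderpreserve}): if $\sigma\preceq\gamma$, then $\ell_\rho(\gamma)\ge\ell_\rho(\sigma)$ for every density $\rho$, so any $\rho$ satisfying $\ell_\rho(\sigma)\ge 1$ automatically satisfies $\ell_\rho(\gamma)\ge 1$. In other words, the constraint imposed by $\gamma$ is redundant as soon as $\Gamma$ contains some $\sigma\preceq\gamma$. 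Thus it suffices to retain, for each walk, a $\preceq$-minimal walk lying below it, and the essential subfamily $\Gamma^*$ should be a set of representatives for the minimal elements of the image set $X\defeq\{x_\gamma:\gamma\in\Gamma\}\subseteq\mathbb{N}_0^{|E|}$.

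First I would set $X\defeq\{x_\gamma:\gamma\in\Gamma\}$ and let $X_{\min}$ be the set of $\preceq$-minimal elements of $X$ under the componentwise ordering (\ref{eq:partial-ordering}). The crucial finiteness statement is that $X_{\min}$ is finite. This is exactly Dickson's Lemma: the poset $(\mathbb{N}_0^d,\preceq)$ is a well-partial-order, so any subset has only finitely many minimal elements (equivalently, contains no infinite antichain and no infinite strictly descending chain). I would either cite this or prove it by induction on $d=|E|$, the one-dimensional case being the well-ordering of $\mathbb{N}_0$ and the inductive step following from the fact that a finite product of well-partial-orders is again a well-partial-order.

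Next, for each $x\in X_{\min}$ I would pick one walk $\gamma\in\Gamma$ with $x_\gamma=x$ (the map $\gamma\mapsto x_\gamma$ need not be injective, but this causes no difficulty) and let $\Gamma^*$ be the resulting finite collection. It then remains to verify $A(\Gamma^*)=A(\Gamma)$. Since $\Gamma^*\subseteq\Gamma$, fewer constraints are imposed, so $A(\Gamma)\subseteq A(\Gamma^*)$. Conversely, take $\rho\in A(\Gamma^*)$ and any $\gamma\in\Gamma$. Because $(\mathbb{N}_0^{|E|},\preceq)$ is well-founded, $x_\gamma$ dominates some minimal element $x_\sigma\in X_{\min}$ with $\sigma\in\Gamma^*$; by (\ref{eq:orderpreserve}) this means $\sigma\preceq\gamma$, whence $\ell_\rho(\gamma)\ge\ell_\rho(\sigma)\ge 1$. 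Together with $\rho\ge 0$ this gives $\rho\in A(\Gamma)$, which completes the argument.

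I expect the only genuine obstacle to be the finiteness of $X_{\min}$, i.e.\ Dickson's Lemma; everything else is a direct consequence of the order-preserving correspondence between walks and their multiplicity vectors. It is worth noting that well-foundedness alone guarantees that every element of $X$ lies above some minimal element, but it does not bound the number of minimal elements — that finiteness genuinely requires the no-infinite-antichain property of the well-partial-order, and this is the one place where a nontrivial combinatorial input is needed.
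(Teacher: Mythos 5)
Your proposal is correct and follows essentially the same route as the paper: map walks to their multiplicity vectors in $\mathbb{N}_0^{|E|}$ via (\ref{eq:curves-to-N}), establish a Dickson-type finiteness result for subsets of $\mathbb{N}_0^{|E|}$, and pull it back to $\Gamma$ through the order-preserving correspondence (\ref{eq:orderpreserve}). The only real difference is that you cite Dickson's Lemma from well-partial-order theory where the paper proves the identical statement from scratch --- its Theorem~\ref{thm:essential-subset}, obtained by induction on the dimension from the minimal-element Lemma~\ref{lem:minimal-elt} --- so that your $\Gamma^*$ consists of representatives of the minimal elements themselves, whereas the paper's recursive construction produces a finite dominating set that may be slightly larger.
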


This result arises as a corollary of the following theorem on sets of
vectors of nonnegative integers.

\begin{theorem}
  \label{thm:essential-subset}
  Let $X\subseteq \mathbb{N}_0^{n}$.  Then there exists a finite
  subset $X^*\subseteq X$ such that
  \begin{equation*}
    X = \bigcup_{x^*\in X^*}\left\{x\in X: x^*\preceq x\right\}.
  \end{equation*}
\end{theorem}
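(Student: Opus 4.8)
The claim is a form of \emph{Dickson's Lemma}, and the plan is to take $X^*$ to be the set of $\preceq$-minimal elements of $X$, i.e. those $x^*\in X$ for which no $x\in X$ satisfies $x\prec x^*$. With this choice there are two things to verify: first, that every $x\in X$ dominates some element of $X^*$, so that the asserted union covers $X$; and second, that $X^*$ is finite. The reverse inclusion $\bigcup_{x^*\in X^*}\{x\in X: x^*\preceq x\}\subseteq X$ is immediate since each such set is a subset of $X$.

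The covering property is the easy half, and I would obtain it from well-foundedness of $\preceq$ on $\mathbb{N}_0^n$. Fix $x\in X$ and consider $Y_x\defeq\{y\in X : y\preceq x\}$. This set is nonempty (it contains $x$) and is contained in the finite box $\prod_{i=1}^n\{0,1,\dots,x_i\}$, hence finite; therefore it has a $\preceq$-minimal element $m$. Such an $m$ is in fact minimal in all of $X$, since any $z\in X$ with $z\prec m$ would satisfy $z\preceq x$ and thus lie in $Y_x$, contradicting minimality of $m$ in $Y_x$. Thus $m\in X^*$ and $x\in\{x\in X: m\preceq x\}$, which proves $X\subseteq\bigcup_{x^*\in X^*}\{x\in X: x^*\preceq x\}$. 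Note that this step requires no finiteness of $X^*$.

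The substance of the theorem is the finiteness of $X^*$, which I would establish through the \emph{well-quasi-ordering} property of $(\mathbb{N}_0^n,\preceq)$: for every infinite sequence $x^{(1)},x^{(2)},\dots$ in $\mathbb{N}_0^n$ there exist indices $i<j$ with $x^{(i)}\preceq x^{(j)}$. I would prove this by induction on $n$. The base case $n=1$ holds because an infinite sequence of nonnegative integers cannot be strictly decreasing, so some later term is at least as large as an earlier one. For the inductive step, given an infinite sequence in $\mathbb{N}_0^n$, first pass to an infinite subsequence along which the first coordinate is nondecreasing (every infinite sequence of nonnegative integers has such a subsequence); restricting attention to the last $n-1$ coordinates of this subsequence and applying the inductive hypothesis on $\mathbb{N}_0^{n-1}$ yields two terms comparable in those coordinates, which, combined with the nondecreasing first coordinate, are $\preceq$-comparable in $\mathbb{N}_0^n$.

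Finally, finiteness of $X^*$ follows from the well-quasi-ordering property: distinct minimal elements are pairwise incomparable, so if $X^*$ were infinite we could list its elements as an infinite sequence containing no pair $i<j$ with $x^{(i)}\preceq x^{(j)}$, contradicting the property just proved. The main obstacle is the inductive step of the well-quasi-ordering argument, namely the bookkeeping needed to extract the subsequence that is monotone in the first coordinate and then correctly invoke the induction hypothesis on the projected coordinates; once that is set up, the conclusion assembles quickly.
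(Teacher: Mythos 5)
Your proof is correct, but it follows a genuinely different route from the paper's. The paper proves the theorem by a direct induction on the dimension $n$: it picks a single minimal element $x^0$ (produced by minimizing the coordinate-sum $h(x)=\sum_i x_i$), splits the elements not dominating $x^0$ into classes $X^0_I$ according to which coordinates fall strictly below $x^0$, observes that the ``small'' coordinates admit only finitely many projections, and recurses on the remaining coordinates; the resulting $X^*$ is an explicitly constructed finite essential subset, which need not consist only of minimal elements. You instead recognize the statement as Dickson's Lemma and take $X^*$ to be \emph{all} $\preceq$-minimal elements of $X$: the covering property follows from well-foundedness (your finite-box argument, which replaces the paper's Lemma on minimal elements), and finiteness follows from the well-quasi-ordering property of $(\mathbb{N}_0^n,\preceq)$, proved by induction via extraction of a subsequence monotone in the first coordinate. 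Each approach has its advantages: yours yields the canonical choice of $X^*$ --- indeed any essential subset must contain every minimal element (if $m$ is minimal and dominates some $x^*$, then $x^*=m$), so your $X^*$ is the unique smallest one --- and it connects the result to standard wqo theory; the paper's argument is more constructive in flavor, avoiding the infinite-subsequence extractions, at the cost of heavier combinatorial bookkeeping and a possibly redundant $X^*$. One small point worth making explicit in your write-up: in the base case of the wqo induction, the absence of a pair $i<j$ with $x^{(i)}\le x^{(j)}$ forces the sequence to be strictly decreasing (compare consecutive terms), which is what contradicts well-ordering of $\mathbb{N}_0$.
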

The proof of Theorem~\ref{thm:essential-subset}, in turn, relies on
the following lemma.
\begin{lemma}
  \label{lem:minimal-elt}
  Every non-empty $X\subseteq\mathbb{N}_0^n$ has a minimal element (not necessarily unique)
  with respect to the partial ordering~\eqref{eq:partial-ordering}.
\end{lemma}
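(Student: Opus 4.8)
The plan is to deduce the existence of a minimal element from the well-ordering of $\mathbb{N}_0$ by introducing a scalar statistic that is strictly monotone for the componentwise order~\eqref{eq:partial-ordering}. Concretely, I would define the total-size function $s:\mathbb{N}_0^n\to\mathbb{N}_0$ by $s(x)=\sum_{i=1}^n x_i$. The key observation I would record first is that $s$ is \emph{strictly} order-preserving: if $x\preceq y$ and $x\neq y$, then $x_i\le y_i$ for every $i$ with strict inequality in at least one coordinate, so $s(x)<s(y)$. This reduces a statement about the partial order $\preceq$ to a statement about the total order on $\mathbb{N}_0$.

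With this in hand, the argument is short. Since $X$ is non-empty, the set $S\defeq\{s(x):x\in X\}\subseteq\mathbb{N}_0$ is a non-empty set of nonnegative integers, hence has a least element $m$ by the well-ordering of $\mathbb{N}_0$. I would then choose any $x^*\in X$ realizing this minimum, i.e.\ with $s(x^*)=m$, and claim that $x^*$ is minimal in $X$ with respect to $\preceq$. Indeed, suppose toward a contradiction that some $y\in X$ satisfies $y\prec x^*$. By the strict monotonicity of $s$ established above, $s(y)<s(x^*)=m$, contradicting the minimality of $m$ in $S$. Therefore no element of $X$ lies strictly below $x^*$, which is exactly the assertion that $x^*$ is a minimal element.

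There is essentially no serious obstacle here: the only point requiring care is the verification that $x\prec y$ forces $s(x)<s(y)$, which is immediate once one unwinds the definition of strict domination in~\eqref{eq:partial-ordering}. For completeness I would note an alternative route that avoids the size function altogether: fixing any $x^0\in X$, the down-set $F\defeq\{y\in X: y\preceq x^0\}$ is finite, since each coordinate of a member of $F$ is bounded above by the corresponding coordinate of $x^0$ (so $|F|\le\prod_{i=1}^n(x^0_i+1)$); a finite non-empty poset always has a minimal element $x^*$, and transitivity of $\preceq$ promotes the minimality of $x^*$ within $F$ to minimality within all of $X$ (any $y\in X$ with $y\preceq x^*$ satisfies $y\preceq x^0$, hence $y\in F$). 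The sum-based proof is cleaner, so I would present that one and relegate the finiteness argument to a remark.
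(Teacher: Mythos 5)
Your proposal is correct and uses the same argument as the paper: minimize the coordinate-sum $\sum_i x_i$ over $X$ via the well-ordering of $\mathbb{N}_0$, then observe that any element $\preceq$-below the minimizer would have strictly smaller sum. The alternative finite down-set argument in your closing remark is a valid (and slightly different) route, but your main proof and the paper's proof coincide.
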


\begin{proof}
  Define the mapping $h:\mathbb{N}_0^n\to \mathbb{N}_0$ by
  \begin{equation*}
    h(x) = \sum_{i=1}^n x_i.
  \end{equation*}
  (When $x=x_\gamma$, $h(x)$ counts the number of ``hops'' in
  $\gamma$.)  By the well-orderedness of the natural numbers, the set
  $\{h(x) : x\in X\}$ has a smallest value.  Let $x\in X$ be such that
  $h(x)$ equals this value.  Then $x$ must be a minimal element.
  Indeed, suppose that $y\in X$ with $y\preceq x$.  Then $y_i\le x_i$
  for all $i=1,2,\ldots,n$.  But $h(x)\le h(y)$ then implies that no
  $y_i$ can be strictly less than the corresponding $x_i$ and
  therefore that $y=x$.
\end{proof}

\begin{proof}[Proof of Theorem~\ref{thm:essential-subset}]

  We proceed by induction.  In the case, $n=1$, the theorem follows
  from the fact that $\mathbb{N}_0$ is well-ordered; $X^*$ can be
  taken as the single smallest element of $X$.  Now, suppose the
  result holds for dimensions $k=1,2,\ldots,n-1$; we will show that it
  also holds for dimension $n$.  Let $X\subseteq\mathbb{N}_0^n$.  If
  $X$ is empty, we are finished.  Otherwise, let $x^0$ be a minimal
  element of $X$, whose existence is guaranteed by
  Lemma~\ref{lem:minimal-elt}.  Let $X^0$ be the set of elements in
  $X$ that do not dominate $x^0$:
  \begin{equation*}
    X^0 = \{x\in X : x^0\not\preceq x\}.
  \end{equation*}
  For any subset $I$ of $\mathcal{I}:=\{1,2,\ldots,n\}$, define
  \begin{equation*}
    X^0_I = \{x\in X^0 : x_i < x^0_i, \forall i\in I,\mbox{ and }\ x_i\geq x^0_i, \forall i\not\in I \}.
  \end{equation*}
  Note that $X^0$ does not contain any elements that dominate $x^0$,
  so $X^0_I$ is empty if $I$ is the empty set.  Also, no element of
  $X^0$ can be dominated by the minimal element $x^0$, so $X^0_I$ is
  empty if $I=\mathcal{I}$.

  Now, let $I$ be a subset of $\mathcal{I}$ such that $X^0_I$ is
  nonempty, and let $k=|I|$.  We have seen that necessarily $1\le k\le
  n-1$.  After reordering, we can write any $x\in X^0_I$ as
  $x=(x',x'')$ with the property that $x'\prec (x^0)'$ and $x''\succeq
  (x^0)''$, where $x'$ is made up of the first $k$ entries of $x$ and
  $x''$ of the remaining $n-k$ entries. To simplify the notation, consider the projections $p(x)=x'$ and $q(x)=x''$. 
Since $p(x)\prec (x^0)'$, there can only be finitely many such projections. So
let $\{z_i\}_{i=1}^{m_I}\subset\mathbb{N}_0^k$ be an enumeration of the
 nonnegative integer vectors satisfying $z_i=p(x)$ for some 
$x\in   X^0_I$ and define for each $i=1,2,\ldots,m_I$ the set
  \begin{equation*}
    X^0_{I,i} := \{ q(x)\in\mathbb{N}_0^{n-k} : (z_i,q(x))\in X^0_I \}.
  \end{equation*}
  Since $1\le n-k \le n-1$, the inductive hypothesis applies to
  $X^0_{I,i}$, and $X^0_{I,i}$ has a finite essential subset $Y^*$.
  Hence, the set $X^*_{I,i}$ of vectors $(z_i,y)$ with $y\in Y^*$ has the property that every $x\in
  X^0_I$ with $x=(z_i,x'')$ dominates some element of $X^*_{I,i}$.

  Define
  \begin{equation*}
    X^*_I = \bigcup_{i=1,2,\ldots,m_I} X^*_{I,i},\qquad\text{and}\qquad
    X^* = \{x_0\}\cup\bigcup_{I\in 2^{\mathcal{I}}} X^*_I
  \end{equation*}
  (using the convention that $X^*_I = \varnothing$ if $X^0_I =
  \varnothing$, and ``undoing'' the reorderings used to simplify the
  construction of $X^*_I$).  As the finite union of finite sets, $X^*$
  is finite.  Moreover, given any $x\in X$, either $x_0\preceq x$ or
  $x\in X^0_{I,i}$ for some $I$ and $i$ and thus dominates some
  $x^*\in X^*_{I,i}$.
\end{proof}

\begin{proof}[Proof of Theorem~\ref{thm:essential-subfamily}]
  The mapping of  $\Gamma$ into $\mathbb{N}_0^{|E|}$ defined 
  in~\eqref{eq:curves-to-N} preserves partial orders,  as described in (\ref{eq:orderpreserve}).
  Let $X = \{ x_\gamma : \gamma\in\Gamma \}$ and let $X^*\subseteq X$
  be the finite essential subset guaranteed by
  Theorem~\ref{thm:essential-subset}.  We may construct $\Gamma^*$ by
  choosing, for each $x^*\in X^*$, a corresponding $\gamma\in\Gamma$
  such that $x_\gamma = x^*$.  Then $\Gamma^*$ is finite, and every
  $\gamma\in\Gamma$ dominates some element of $\Gamma^*$.  Since
  $\Gamma^*\subseteq \Gamma$, we have that $A(\Gamma^*)\supseteq
  A(\Gamma)$.  Moreover, if $\rho\in A(\Gamma^*)$ and
  $\gamma\in\Gamma$ then there must exist a $\gamma^*\in\Gamma^*$ so
  that
  \begin{equation*}
    \ell_\rho(\gamma) \ge \ell_\rho(\gamma^*) \ge 1.
  \end{equation*}
  Thus, we also have the reverse inequality $A(\Gamma^*)\subseteq
  A(\Gamma)$.
\end{proof}

\section{Modulus as a Convex Program and the Karush-Kuhn-Tucker conditions}
\label{sec:kkt}

As mentioned above, upon enumerating the edges of the graph, each density $\rho$ can be thought as a vector in $\R^{|E|}$: $\rho=(\rho(e_1), \dots,\rho(e_n))$.
Also, by (\ref{eq:curves-to-N}), every walk $\ga$ corresponds to a vector $x_\ga\in \N_0^{|E|}\subset\R^{|E|}$. Moreover, by (\ref{eq:ellrhom}), the $\rho$-length of $\ga$ is simply the dot product in $\R^{|E|}$ of the vectors $\rho$ and $x_\ga$. In particular, given a walk $\ga$, the set $\{\rho\in \R^{|E|}: \ell_\rho(\ga)\geq 1\}$ is a closed half-space.
And given a family of walks $\Ga$, the admissible set $A(\Ga)$ is an intersection of closed half-spaces in $\R^{|E|}$, hence is convex. 

Therefore, since the energy $\cE(\rho)$ is a convex function of $\rho$, the problem of
computing the modulus of $\Gamma$ can be categorized as a ``standard
convex optimization problem'', namely one of minimizing a convex functional on a convex set.
\begin{equation}
  \label{eq:cvx-opt-orig}
  \begin{split}
    \text{minimize}\qquad &\cE(\rho)\\
    \text{subject to}\qquad &\rho\in A(\Gamma)
  \end{split}
\end{equation}
We see that existence and uniqueness of a minimizer for this problem  follows by compactness and strict convexity of the Euclidean balls in $\R^{|E|}$. This proves Theorem \ref{thm:existuniq}.
In fact, the same applies to the generalized energy $\cE_p(\rho)\defeq \sum_i \rho(e_i)^p$, for any
$p> 1$.

Furthermore,
 Theorem~\ref{thm:essential-subfamily} implies that $\Gamma$
can be taken to be finite.  Thus, the constraint $\rho\in A(\Gamma)$
can be replaced by the finite set of inequalities $\rho \ge 0$ and
$\ell_\rho(\gamma_i)\ge 1$ for $i=1,2,\ldots,m$, where
$\{\gamma_i\}_{i=1}^m=\Gamma$.  Next, we note that the requirement
$\rho\ge 0$ can be omitted altogether.  Define
\begin{equation*}
  \tilde{A}(\Gamma) := \{ \rho:E\to\mathbb{R}\;:\;
  \ell_\rho(\gamma)\ge 1\;\forall\gamma\in\Gamma \}.
\end{equation*}
Neither the minimum value in \eqref{eq:cvx-opt-orig}, nor the extremal
$\rho$ will change if the constraint $\rho\in A(\Gamma)$ is replaced
by $\rho\in\tilde{A}(\Gamma)$; the property $\rho\ge 0$ will be
automatically satisfied by the extremal $\rho$.  Indeed, given any
$\rho\in\tilde{A}(\Gamma)$, the positive part $\rho_+(e) :=
\max\{\rho(e),0\}$ is in $A(\Gamma)$ and satisfies
$\cE(\rho_+)\le\cE(\rho)$.  Thus, the constraint $\rho\ge0$ need not
be explicitly enforced.

Replacing the constraint by the equivalent finite set of inequalities,
problem~\eqref{eq:cvx-opt-orig} takes the form of an ``ordinary convex
program''~\cite[Sec.~28]{Rockafellar}
\begin{equation}
  \label{eq:cvx-opt}
  \begin{split}
    \text{minimize}\qquad &\cE(\rho)\\
    \text{subject to}\qquad & 1 - \ell_\rho(\gamma_i) \le 0\qquad i=1,2,\ldots,m.
  \end{split}
\end{equation}
where the minimum is taken over all real valued functions
$\rho:E\to\mathbb{R}$.  This is an ordinary convex program with affine
inequality constraints.

Define the Lagrangian
\begin{equation}
  \label{eq:cvx-lagrangian}
  L(\rho,\lambda) := \cE(\rho) + \sum_{i=1}^m\lambda_i\left(1 - \ell_\rho(\gamma_i)\right)
\end{equation}
associated with problem~\eqref{eq:cvx-opt}.  We recall that the
optimization problem is said to have the property of \emph{strong
  duality} if the Lagrangian has a saddle point.  That is,
problem~\eqref{eq:cvx-opt} exhibits strong duality if there exist
$\rho^*:E\to\mathbb{R}$ and $\lambda^*\in\mathbb{R}^m_+$ (the set of
$m$-dimensional vectors with non-negative entries) such that
\begin{equation}\label{eq:saddle}
  L(\rho^*,\lambda) \le L(\rho^*,\lambda^*) \le L(\rho,\lambda^*)
  \qquad\forall \rho:E\to \mathbb{R},\quad \lambda\in\mathbb{R}^m_+\,.
\end{equation}

For convex problems, Slater's condition (see
also~\cite[Theorem 28.2]{Rockafellar}) ensures strong duality if a
\emph{strictly feasible} point exists.  In our context, a choice of
$\rho$ is called \emph{strictly feasible} if all inequality
constraints are satisfied strictly: $\ell_\rho(\gamma_i)>1$ for
$i=1,2,\ldots,m$.  By choosing $\rho(e)=2$ for every $e\in E$, it is clear that strictly feasible choices for $\rho$
exist.

Since the problem is convex, sufficiently smooth, and exhibits strong
duality, the Karush-Kuhn-Tucker (KKT) conditions provide necessary and
sufficient conditions for optimality.  Stated for the present problem,
the KKT conditions ensure the existence  of  an optimal
$\rho^*:E\to\mathbb{R}$ and dual optimal
$\lambda^*\in\mathbb{R}^m_+$~\cite[Theorem 28.3]{Rockafellar} satisfying
\begin{gather}
  \lambda^*_i \ge 0,\quad 1 - \ell_{\rho^*}(\gamma_i) \le 0
  \quad  \text{for }i=1,2,\ldots,m\\
  \lambda^*_i\left(1 - \ell_{\rho^*}(\gamma_i)\right) = 0\quad
  \text{for }i=1,2,\ldots,m\label{eq:comp-slack}\\
  \nabla_\rho L(\rho^*,\lambda^*) = 0
\end{gather}
The density $\rho^*$ is the minimizer of~\eqref{eq:cvx-opt}.

The values of the Lagrange multipliers $\lambda^*$ provide a
straightforward interpretation of $\tilde{\Gamma}$ in Beurling's
Criterion.
\begin{theorem}
Assume $\Gamma$ is a finite (and enumerated) family of walks.
Let $\rho^*$ and $\lambda^*$ be a saddle point for the Lagrangian
  $L$ as in {\rm (\ref{eq:saddle})}.  Then
  \begin{equation*}
    \tilde{\Gamma} \defeq \left\{\gamma_i: \lambda_i^*>0\right\}
  \end{equation*}
  can be taken to be the subfamily 
  $\tilde{\Gamma}$ of $\Gamma$ in  Beurling's Criterion
  (Theorem~\ref{thm:beurling}).
\end{theorem}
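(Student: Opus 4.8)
The plan is to verify directly that $\tilde{\Gamma}$ satisfies the two requirements in the statement of Beurling's Criterion (Theorem~\ref{thm:beurling}): that $\ell_{\rho^*}(\gamma)=1$ for every $\gamma\in\tilde{\Gamma}$, and that the inequality (\ref{eq:beurling}) holds for the density $\rho^*$. The saddle point $(\rho^*,\lambda^*)$ supplies exactly the three KKT conditions displayed above, and each of the two requirements will fall out of one of them. I first note that $\rho^*$ is the minimizer of~(\ref{eq:cvx-opt}), hence is admissible, $\rho^*\in A(\Gamma)$, with the property $\rho^*\ge 0$ automatically satisfied as discussed. It then remains only to check the two defining properties of a Beurling subfamily.

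For the first property I would invoke complementary slackness~(\ref{eq:comp-slack}). If $\gamma_i\in\tilde{\Gamma}$ then $\lambda_i^*>0$ by definition, and since $\lambda_i^*\left(1-\ell_{\rho^*}(\gamma_i)\right)=0$, dividing through by $\lambda_i^*$ forces $\ell_{\rho^*}(\gamma_i)=1$. Thus every walk in $\tilde{\Gamma}$ has $\rho^*$-length exactly one, as required.

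For the second (Beurling) property, the key step is to unpack the stationarity equation $\nabla_\rho L(\rho^*,\lambda^*)=0$. Using $\cE(\rho)=\sum_e\rho(e)^2$ together with the identity $\ell_\rho(\gamma_i)=\sum_e m(\gamma_i,e)\rho(e)$ from~(\ref{eq:ellrhom}), differentiating the Lagrangian~(\ref{eq:cvx-lagrangian}) with respect to each entry $\rho(e)$ gives, for every edge $e$,
\[
2\rho^*(e)=\sum_{i=1}^m\lambda_i^*\, m(\gamma_i,e).
\]
In other words, $\rho^*$ is, up to the factor $1/2$, a nonnegative combination of the multiplicity vectors $x_{\gamma_i}$ with weights $\lambda_i^*$. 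Now take any $h:E\to\R$ with $\ell_h(\gamma)\ge 0$ for all $\gamma\in\tilde{\Gamma}$. Substituting the stationarity identity and interchanging the order of summation yields
\[
\sum_{e\in E}h(e)\rho^*(e)=\frac12\sum_{i=1}^m\lambda_i^*\sum_{e\in E}m(\gamma_i,e)h(e)=\frac12\sum_{i=1}^m\lambda_i^*\,\ell_h(\gamma_i).
\]
Since $\lambda_i^*\ge 0$ always, and $\lambda_i^*>0$ holds precisely when $\gamma_i\in\tilde{\Gamma}$, the only surviving terms in the final sum are those with $\gamma_i\in\tilde{\Gamma}$, for which $\ell_h(\gamma_i)\ge 0$ by hypothesis. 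Each such term is a product of nonnegative numbers, so the whole sum is nonnegative, establishing~(\ref{eq:beurling}).

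Having verified both properties, $\tilde{\Gamma}$ qualifies as a Beurling subfamily for $\rho^*$. I do not anticipate any serious obstacle: the argument is essentially a reinterpretation of the KKT conditions, and the only computation of note is the gradient of the Lagrangian, which is elementary because both the objective and the constraints are explicit quadratic and affine functions of the vector $\rho$. The one point requiring a little care is the bookkeeping that separates the indices with $\lambda_i^*=0$ from those with $\lambda_i^*>0$, so as to confirm that the walks outside $\tilde{\Gamma}$ drop out of the final sum rather than contributing a term of uncontrolled sign.
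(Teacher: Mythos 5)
Your proof is correct, and it reaches the conclusion by a route that differs from the paper's in its second half. Both arguments handle the first requirement identically, via complementary slackness~\eqref{eq:comp-slack}. For the Beurling inequality~\eqref{eq:beurling}, however, the paper never computes the gradient of the Lagrangian: it expands the saddle-point inequality $L(\rho^*+\mu h,\lambda^*)\ge L(\rho^*,\lambda^*)$ from~\eqref{eq:saddle} for a perturbation in the direction $h$, obtains
\[
2\sum_{e\in E}\rho^*(e)h(e) \;\ge\; \sum_{i:\lambda_i^*>0}\lambda_i^*\,\ell_h(\gamma_i)-\mu\,\cE(h),
\]
and lets $\mu\to 0^+$. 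You instead solve the stationarity condition $\nabla_\rho L(\rho^*,\lambda^*)=0$ explicitly, getting $2\rho^*(e)=\sum_i\lambda_i^*\,m(\gamma_i,e)$, and from it the exact identity $\sum_{e\in E}h(e)\rho^*(e)=\tfrac12\sum_i\lambda_i^*\,\ell_h(\gamma_i)$, of which the desired inequality is an immediate corollary. Your version buys more: it exhibits $\rho^*$ as an explicit nonnegative (conic) combination of the multiplicity vectors $x_{\gamma_i}$ of the walks with active multipliers, which in particular makes $\rho^*\ge 0$ self-evident and turns the Beurling pairing into an identity rather than a one-sided bound. The paper's perturbation argument buys robustness: it uses only the saddle inequality~\eqref{eq:saddle} as hypothesized in the theorem statement, with no appeal to differentiability, so it transfers verbatim to situations where the energy is convex but not smooth and stationarity would have to be phrased with subgradients. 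One small point of hygiene in your write-up: the stationarity condition you invoke is itself a consequence of the saddle hypothesis (the map $\rho\mapsto L(\rho,\lambda^*)$ is smooth, convex, and minimized at $\rho^*$ by the second inequality in~\eqref{eq:saddle}), so it is worth saying this in one line rather than citing the KKT display as an independent assumption; with that remark added, your argument is self-contained under exactly the stated hypothesis.
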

\begin{proof}
  First, note that the ``complementary slackness''
  condition~\eqref{eq:comp-slack} implies that
  $\tilde{\Gamma}\subset \Gamma_{0}(\rho^*)$ as needed by the
  criterion.  Now, suppose $h:E\to\mathbb{R}$ satisfies
  $\ell_h(\ga) \ge 0$, for all  $\gamma\in\tilde{\Gamma}$,
  and that $\mu>0$.  The saddle-point property requires that
  \begin{equation*}
    \begin{split}
      0 &\le L(\rho^*+\mu h,\lambda^*) - L(\rho^*,\lambda^*) \\
      &= \cE(\rho^*+\mu h) + \sum_{\substack{i=1\\\lambda^*_i>0}}^m\lambda^*_i\left(1-\ell_{\rho^*+\mu h}(\gamma_i)\right)
       - \cE(\rho^*)\\
       &= 2\mu\sum_{e\in E}\rho^*(e)h(e) + \mu^2\cE(h) - \mu\sum_{\substack{i=1\\\lambda^*_i>0}}^m\lambda^*_i\ell_h(\gamma_i),
    \end{split}
  \end{equation*}
  which holds if and only if
  \begin{equation*}
    2\sum_{e\in E}\rho^*(e)h(e) \ge \sum_{\substack{i=1\\\lambda^*_i>0}}^m\lambda^*_i\ell_h(\gamma_i) - \mu\cE(h).
  \end{equation*}
  Since $\mu>0$ is arbitrary and since, by hypothesis,
  $\ell_h(\gamma_i)\ge 0$ whenever $\lambda^*_i>0$, this implies
  \begin{equation*}
    \sum_{e\in E}\rho^*(e)h(e) \ge 0.
  \end{equation*}
\end{proof}

\section{An algorithm for approximating the modulus}

\subsection{Algorithm}
Although~\eqref{eq:cvx-opt} provides a convex programming
characterization of the modulus problem, it may not be particularly
useful in practice.  The problem is that, although
Theorem~\ref{thm:essential-subfamily} shows that $\Gamma$ can be taken
to be finite, the number of walks in $\Gamma$ may still be very large.
Consider, for example, the graph shown in
Figure~\ref{fig:choked-graph}, where $\Gamma$ is the set of all walks
connecting the red and blue nodes.  More generally, consider a graph
with $N$ nodes with the property that the first $N-1$ nodes induce a
complete subgraph and the $N$th node has node $1$ as its sole
neighbor.  Let $\Gamma$ be the set of all walks connecting node $2$ to
node $N$.  It is straightforward to verify that the set
$\Gamma^*\subset\Gamma$ defined as the set of all simple paths
connecting $2$ to $N$ form an essential subfamily of $\Gamma$, and
every path in $\Gamma^*$ is minimal in the sense of (\ref{eq:curve-partial-ordering}).  However, only counting the
paths in $\Gamma^*$ that traverse all nodes, we already have
$|\Gamma^*| \ge (N-3)!$, which corresponds to an enormous number of constraints, even
for moderately large graphs.  On the other hand, since the edge
connecting nodes $1$ and $N$ provides a ``choke point'' through which
all curves in $\Gamma$ must pass, it is natural to expect that there
may exist a relatively small subset $\Gamma'\subset\Gamma$ such that
$\Mod(\Gamma')\approx\Mod(\Gamma)$.

\begin{figure}
  \centering
  \includegraphics[width=0.75\textwidth]{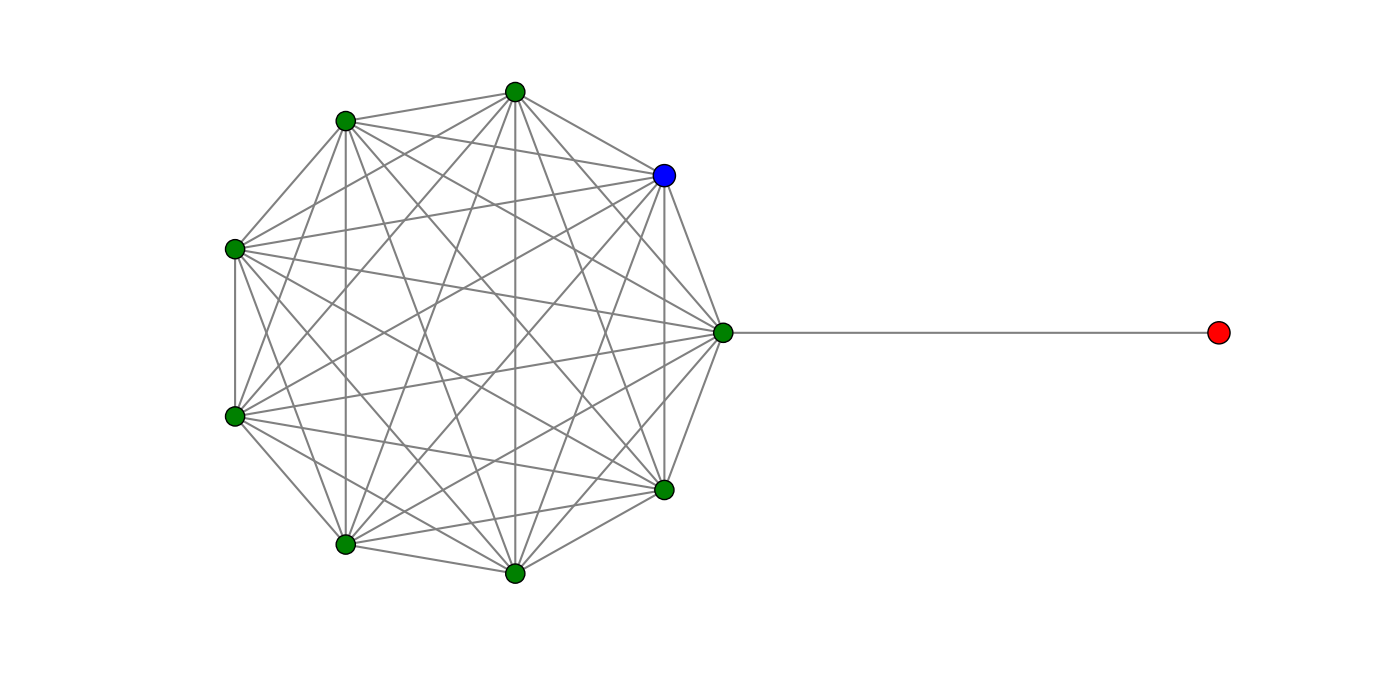}
  \caption{Only a relatively small number of paths are required to
    approximate the modulus of the set of all walks connecting the red
    and blue nodes.}
  \label{fig:choked-graph}
\end{figure}

In this section, we present an algorithm for approximating the modulus
of a family of curves $\Gamma$ which performs well in cases where such
a $\Gamma'$ exists.  In the following, we assume that there exists an
algorithm, denoted $\texttt{shortest}(\rho)$, which produces a
$\rho$-shortest walk in $\Gamma$.  That is, given
$\rho:E\mapsto[0,\infty)$
\begin{equation*}
  \gamma^* = \texttt{shortest}(\rho) \qquad\implies\qquad
  \forall\gamma\in\Gamma:\, \ell_\rho(\gamma^*)\le\ell_\rho(\gamma).
\end{equation*}
In the example above, for instance, $\texttt{shortest}$ can be
implemented by means of Dijkstra's algorithm, see \cite[Theorem 7.15]{aigner}.  Pseudocode for the
approximation algorithm is given in Algorithm~\ref{alg:mod}.
Since our arguments do not require substantial modifications, we consider the more general case of $p$-modulus $\Mod_p(\Gamma) \defeq \inf_{\rho \in A(\Gamma )} \cE_p(\rho)$, for $1< p<\infty$.

\begin{algorithm}
  \begin{algorithmic}
    \STATE $\rho \leftarrow 0$ \STATE $\Gamma' \leftarrow
    \varnothing$ \LOOP \STATE $\gamma \leftarrow
    \texttt{shortest}(\rho)$ \IF{$\ell_\rho(\gamma)^{p} \ge
      1-\etol$} \STATE \texttt{stop}
    \ENDIF
    \STATE
    $\Gamma'\leftarrow\Gamma'\cup\{\gamma\}$
    \STATE $\rho\leftarrow\argmin\{\cE_p(\rho) : \rho\in
    A(\Gamma')\}$
    \ENDLOOP
  \end{algorithmic}
  \caption{Approximates $\Mod_p(\Gamma)$ with an error tolerance of
    $0<\etol<1$.}
  \label{alg:mod}
\end{algorithm}

During each iteration through the loop, a shortest walk is chosen from
$\Gamma$ using the provided \texttt{shortest} algorithm.  If the
stopping criterion is not met, the new walk is added to $\Gamma'$ and
the convex optimization problem described in Section~\ref{sec:kkt} is
solved using a standard convex programming code.

\begin{theorem} Let $\Gamma$ be a family of walks on a finite graph and suppose that $\rho^\star$ is the extremal density for $\Mod_p(\Gamma)$ with $1<p<\infty$. Fix an error tolerance $0<\etol<1$.  Then,  Algorithm~\ref{alg:mod} will terminate in finite time, and will output a subfamily $\Gamma'\subset\Gamma$ and a density $\rho$. 

Moreover, $\Gamma'$ has the property that
  \begin{equation}
0\leq   \frac{\Mod_p(\Gamma)- \Mod_p(\Gamma')}{\Mod_p(\Ga)} \le \etol,
    \label{eq:alg-mod}
  \end{equation}
while $\rho$ satisfies
{\renewcommand\arraystretch{2}
\begin{equation}\label{eq:alg-rho}
\frac{\|\rho^\star-\rho\|_p}{\|\rho^\star\|_p}\leq \left\{\begin{array}{ll}2^{1-1/p} \left(p\etol\right)^{1/p} &\qquad \mbox{if $p\geq 2$}\\
2^{1/p}\left(\frac{p\etol}{p-1}\right)^{1-1/p}&\qquad\mbox{if $1<p<2$}
\end{array}\right.
\end{equation}}
\end{theorem}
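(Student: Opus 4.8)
The plan is to establish the three assertions in order: finite termination, the modulus estimate \eqref{eq:alg-mod}, and the density estimate \eqref{eq:alg-rho}.

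\textbf{Termination.} First I would note that at the top of each pass through the loop the current $\rho$ is admissible for the current $\Gamma'$, since it was produced as the previous $\argmin$ over $A(\Gamma')$ (and on the first pass $\rho=0$, $\Gamma'=\varnothing$, so there is nothing to check). Whenever the stopping test fails, the new walk $\gamma=\texttt{shortest}(\rho)$ obeys $\ell_\rho(\gamma)<1$, while every $\sigma$ already in $\Gamma'$ obeys $\ell_\rho(\sigma)\ge 1$. Because $\rho\ge 0$, this forces $\sigma\not\preceq\gamma$ for each such $\sigma$ (Definition~\ref{def:curve-partial-ordering}), so by the order-preserving correspondence \eqref{eq:orderpreserve} the multiplicity vector $x_\gamma$ is dominated by no previously added $x_\sigma$. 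If the loop never stopped we would obtain an infinite sequence $x_{\gamma_1},x_{\gamma_2},\dots$ in $\mathbb{N}_0^{|E|}$ with $x_{\gamma_i}\not\preceq x_{\gamma_j}$ for all $i<j$; but Theorem~\ref{thm:essential-subset} rules this out, since the range of such a sequence would possess a finite essential subset, all of whose members occur at bounded indices, and then any later vector would dominate one of them. Hence only finitely many walks are ever added and the algorithm halts.

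\textbf{Modulus estimate.} At termination $\rho$ is extremal for the final $\Gamma'$, so $\cE_p(\rho)=\Mod_p(\Gamma')$ and $\rho\in A(\Gamma')$, while $\gamma=\texttt{shortest}(\rho)$ realizes $L_\Gamma(\rho)=\inf_{\sigma\in\Gamma}\ell_\rho(\sigma)=\ell_\rho(\gamma)$, and the stopping test gives $L_\Gamma(\rho)^p\ge 1-\etol$. Rescaling, $\hat\rho := \rho/L_\Gamma(\rho)$ lies in $A(\Gamma)$, so $\Mod_p(\Gamma)\le\cE_p(\hat\rho)=\cE_p(\rho)/L_\Gamma(\rho)^p=\Mod_p(\Gamma')/L_\Gamma(\rho)^p\le\Mod_p(\Gamma')/(1-\etol)$. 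Together with monotonicity $\Mod_p(\Gamma')\le\Mod_p(\Gamma)$ (since $\Gamma'\subset\Gamma$), this rearranges to exactly \eqref{eq:alg-mod}.

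\textbf{Density estimate.} Here I would use uniform convexity of the $p$-energy. Since $\Gamma'\subset\Gamma$ gives $A(\Gamma)\subseteq A(\Gamma')$, the true extremal density $\rho^\star$ is itself admissible for $\Gamma'$, so the midpoint $\tfrac12(\rho+\rho^\star)\in A(\Gamma')$ and minimality of $\rho$ over $A(\Gamma')$ yields $\|\tfrac{\rho+\rho^\star}{2}\|_p^p\ge\Mod_p(\Gamma')$. Write $m^\star=\Mod_p(\Gamma)=\|\rho^\star\|_p^p$ and $m'=\Mod_p(\Gamma')=\|\rho\|_p^p$; the previous step gives $m^\star-m'\le\etol\,m^\star$. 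For $p\ge 2$ I apply Clarkson's first inequality $\|\tfrac{\rho+\rho^\star}{2}\|_p^p+\|\tfrac{\rho-\rho^\star}{2}\|_p^p\le\tfrac12(m'+m^\star)$; the midpoint bound cancels $m'$ and leaves $\|\rho-\rho^\star\|_p^p\le 2^{p-1}(m^\star-m')\le 2^{p-1}\etol\,m^\star$, which after a $p$-th root and division by $\|\rho^\star\|_p$ is precisely the stated $2^{1-1/p}\etol^{1/p}$. For $1<p<2$ I apply the second Clarkson inequality $\|\tfrac{\rho+\rho^\star}{2}\|_p^{q}+\|\tfrac{\rho-\rho^\star}{2}\|_p^{q}\le\bigl(\tfrac12(m'+m^\star)\bigr)^{1/(p-1)}$ with $q=p/(p-1)$, so that the midpoint bound gives $\|\tfrac{\rho-\rho^\star}{2}\|_p^{q}\le\bigl(\tfrac12(m'+m^\star)\bigr)^{1/(p-1)}-(m')^{1/(p-1)}$, and I estimate the right side via convexity of $t\mapsto t^{1/(p-1)}$ together with $m^\star-m'\le\etol\,m^\star$. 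The hard part will be exactly this final estimate: the factor $1/(p-1)$ is the footprint of the degenerate modulus of convexity of $\|\cdot\|_p$ for $1<p<2$, and squeezing out the clean constant $\bigl(\tfrac{2}{p-1}\etol\bigr)^{1-1/p}$ — rather than a slightly larger one — will require care, quite possibly invoking the sharp Hanner inequality in place of Clarkson's to control the uniform convexity constant optimally.
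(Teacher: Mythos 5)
Your termination argument, your proof of \eqref{eq:alg-mod}, and your proof of \eqref{eq:alg-rho} for $p\ge 2$ are all correct, and two of them improve on the paper. For termination, the paper only argues that $|\Gamma'|$ is increasing and bounded by $|\Gamma|$, ``which we can assume to be finite'' by Theorem~\ref{thm:essential-subfamily}; this reduction is loose, because \texttt{shortest} searches all of $\Gamma$ and the walks it adds need not lie in any fixed finite essential subfamily, so $\Gamma'$ need not be contained in one. Your antichain argument --- each newly added walk is dominated by no earlier one, via \eqref{eq:orderpreserve}, while Theorem~\ref{thm:essential-subset} forbids an infinite sequence with that property --- analyzes the algorithm as actually stated and closes this loophole. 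Your proof of \eqref{eq:alg-mod} is the same as the paper's. For \eqref{eq:alg-rho} with $p\ge 2$ your route is genuinely different: the paper rescales $\rho+\rho^\star$ to be admissible for $\Gamma$ (its inequality \eqref{eq:sum}) and then claims $2^p\bigl(1-\bigl(\tfrac{1+(1-\etol)^{1/p}}{2}\bigr)^p\bigr)\le 2^{p-1}\etol$ ``by convexity of $t\mapsto t^p$''; convexity in fact gives the reverse inequality (at $p=2$, $\etol=0.1$ the left side is $\approx 0.2026>0.2$), so the paper's last step fails. Your midpoint argument --- $\rho^\star\in A(\Gamma)\subseteq A(\Gamma')$, hence $\tfrac12(\rho+\rho^\star)\in A(\Gamma')$ has energy at least $\Mod_p(\Gamma')$, then Clarkson --- yields $\|\rho-\rho^\star\|_p^p\le 2^{p-1}\bigl(\Mod_p(\Gamma)-\Mod_p(\Gamma')\bigr)\le 2^{p-1}\etol\,\Mod_p(\Gamma)$, which is exactly the stated constant; so for $p\ge2$ your proof actually repairs the paper's.

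The genuine gap is the case $1<p<2$, which you leave open, and the natural completion of your outline does not reach the stated constant. Writing $m^\star=\Mod_p(\Gamma)$, $m'=\Mod_p(\Gamma')$, $q=p/(p-1)$, $r=1/(p-1)$, your two ingredients give $\|\tfrac{\rho-\rho^\star}{2}\|_p^q\le\bigl(\tfrac{m'+m^\star}{2}\bigr)^{r}-(m')^{r}$, and the mean value theorem (with $m^\star-m'\le\etol m^\star$ and the intermediate point bounded by $m^\star$) bounds the right side by $\tfrac{r\etol}{2}(m^\star)^r$. Unhalving costs the factor $2^q$, and since $2^{(q-1)/q}=2^{1/p}$ you end with $\frac{\|\rho-\rho^\star\|_p}{\|\rho^\star\|_p}\le 2^{1/p}\bigl(\tfrac{\etol}{p-1}\bigr)^{1-1/p}$, which exceeds the stated bound by the factor $2^{2/p-1}\in(1,2)$. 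So as submitted the proposal proves \eqref{eq:alg-rho} for $1<p<2$ only with a worse constant. Your suspicion that sharper tools are needed is well founded: the paper's own chain for this case is also invalid, since it uses $\bigl(\tfrac{1+s}{2}\bigr)^q\ge\tfrac{1+s^q}{2}$ with $s=(1-\etol)^{1/p}$ (false, as $t\mapsto t^q$ is convex) and later replaces $2^{q-1}$ by $2$, which would require $q\le 2$ whereas here $q>2$. Thus neither your proposal nor the paper contains a valid proof of the exact constant $\bigl(\tfrac{2}{p-1}\etol\bigr)^{1-1/p}$; a rigorous substitute is either your bound above, or the same midpoint argument run through the two-uniform-convexity inequality $\|\tfrac{f+g}{2}\|_p^2+(p-1)\|\tfrac{f-g}{2}\|_p^2\le\tfrac12\bigl(\|f\|_p^2+\|g\|_p^2\bigr)$, which gives $\frac{\|\rho-\rho^\star\|_p}{\|\rho^\star\|_p}\le\tfrac{2}{\sqrt{p(p-1)}}\,\etol^{1/2}$.
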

\begin{remark}\label{rem:pone}
The inequality (\ref{eq:alg-mod}) holds even when $p=1$. However, (\ref{eq:alg-rho}) depends on the strict convexity of the $p$-norm when $p>1$. Also the lower bound in \eqref{eq:alg-mod} follows from the monotonicity of modulus.
\end{remark}
\begin{proof}
  Since the size of $\Gamma'$ is monotonically increasing within the
  loop and is bounded from above by $|\Gamma|$, which by Theorem \ref{thm:essential-subfamily} we can assume to be finite, the algorithm will certainly terminate in
  finite time.  (As demonstrated in the examples to follow, however,
  for certain graphs the algorithm tends to terminate after relatively
  few iterations.)  

  To see~\eqref{eq:alg-mod}, we observe that
 the identity  $\cE_p(\rho)=\Mod_p(\Gamma')$ is a loop invariant.  Moreover, the loop
  can only terminate if $\ell := \ell_\rho(\gamma) > 0$.  But when $\ell >
  0$, then the definition of $\gamma$ implies that $\frac{1}{\ell}\rho\in
  A(\Gamma)$ and, thus, that
  \begin{equation*}
    \Mod_p(\Gamma) \le \cE_p\left(\frac{\rho}{\ell}\right)
    = \frac{1}{\ell^p}\cE_p(\rho) = \frac{1}{\ell^p}\Mod_p(\Gamma').
  \end{equation*}
  When the loop terminates, $\ell^p \ge 1-\etol$, which implies 
\[
\Mod_p(\Ga)\leq (1-\etol)^{-1}\Mod_p(\Ga').
\]
 This provides
  an estimate of the relative error upon termination:
  \begin{equation*}
    \frac{\Mod_p(\Gamma)-\Mod_p(\Gamma')}{\Mod_p(\Gamma)}=1-\frac{\Mod_p(\Ga')}{\Mod_p(\Ga)} \le 1-(1-\etol)=\etol.
  \end{equation*}

In order to prove (\ref{eq:alg-rho}), consider the Clarkson inequality for $p\geq 2$:
\begin{equation}\label{eq:parallel}
\cE_p(\rho+\rho^\star)+\cE_p(\rho-\rho^\star)\leq 2^{p-1}\left(\cE_p(\rho)+\cE_p(\rho^\star)\right).
\end{equation}
Notice that $(\rho+\rho^\star)/(1+(1-\etol)^{1/p})\in A(\Gamma)$. So
\begin{equation}\label{eq:sum}
\cE_p(\rho+\rho^\star)\geq (1+(1-\etol)^{1/p})^p\Mod_p(\Gamma).
\end{equation}
Also $\cE_p(\rho^\star)=\Mod_p(\Gamma)$ and $\cE_p(\rho)=\Mod_p(\Gamma')\leq\Mod_p(\Gamma)$, by (\ref{eq:alg-mod}). Therefore, by (\ref{eq:sum}), inequality (\ref{eq:parallel}) becomes
\begin{align*}
\cE_p(\rho-\rho^\star)  &\leq 2^p\left(1-\left(\frac{1+(1-\etol)^{1/p}}{2}\right)^p\right)\Mod_p(\Gamma)\\
&\leq 2^{p-1}p\left(1-(1-\etol)^{1/p}\right)\Mod_p(\Gamma)&\text{(since $(1+t)^p\ge 1+pt$)} \\
& \leq 2^{p-1}p\Mod_p(\Gamma)\etol& \text{(since $0<\etol<1$).} 
\end{align*}

Rewriting we obtain that
\[
\|\rho^\star-\rho\|_p^p\leq 2^{p-1}p\|\rho^\star\|_p^p\etol.
\]

When $1<p<2$, the Clarkson inequality reads:
\begin{equation}\label{eq:parallel2}
\|\rho+\rho^\star\|_p^q+\|\rho-\rho^\star\|_p^q\leq 2\left(\|\rho\|_p^p+\|\rho^\star\|_p^p\right)^{q/p}
\end{equation}
where $q=p/(p-1)$.

Therefore, as before by (\ref{eq:sum}) and the first inequality in (\ref{eq:alg-mod}),  (\ref{eq:parallel2}) becomes
\begin{align*}
\|\rho-\rho^\star\|_p^q  &\leq 2\left(\|\rho\|_p^p+\|\rho^\star\|_p^p\right)^{q/p}-\left(1+(1-\etol)^{1/p}\right)^q\|\rho^\star\|_p^q\\
&\leq \left[2^{1+q/p}-\left(1+(1-\etol)^{1/p}\right)^q\right]\|\rho^\star\|_p^q\\
&\leq  2^q\left[1-\left(\frac{1+(1-\etol)^{1/p}}{2}\right)^q\right]\|\rho^\star\|_p^q\\
&=2^{q-1}q\etol \|\rho^\star\|_p^q.
\end{align*}
\end{proof}

\subsection{Examples}

\subsubsection{Choked graph}
In the case of the ``choked graph'' described at the beginning of this
section (see also Figure~\ref{fig:choked-graph}), the algorithm
$\texttt{shortest}(\rho)$ can be taken as an implementation of
Dijkstra's algorithm.  As described previously, the obvious essential
subfamily $\Gamma^*\subset\Gamma$ of simple paths in $\Gamma$ is made
up of minimal elements (in the sense of the
ordering~\eqref{eq:curve-partial-ordering}) and has at least
$|\Gamma^*|\ge (N-3)!$ elements.  However, due to the ``choke point''
in the graph, it seems reasonable to expect that $\Mod(\Gamma)$ can be
approximated by many fewer paths than this.
Table~\ref{tab:choked-graph} demonstrates the results of
Algorithm~\ref{alg:mod} with tolerance $\etol=10^{-2}$ applied to the
$N$-node choked graph for several values of $N$.  The ``true'' values
of $\Mod(\Gamma)$ in the table were computed via the effective
resistance matrix, see Section \ref{ssec:effcond}.  In all cases, only a modest $\mathcal{O}(N)$
number of paths $\Gamma'\subset\Gamma$ are required to approximate
$\Mod(\Gamma)$ to two decimal places.
\begin{table}
  \centering
  \begin{tabular}{c|c|c|c|c}
    $N$ & $(N-3)!$ & $|\Gamma'|$ & $\Mod(\Gamma')$ &  $\Mod(\Gamma)$ \\
    \hline
    10 & $5040$ & 8 & 0.81818182 & 0.81818182\\
    40 & $1.38\times 10^{43}$ & 38 & 0.95121951 & 0.95121951\\
    160 & $1.17\times 10^{278}$ & 158 &0.98757764 & 0.98757764\\
    640 & $2.47\times 10^{1511}$ & 400 & 0.99503722 & 0.99687988
    \end{tabular}
    \vspace{2em}
    \caption{The results of applying Algorithm~\ref{alg:mod} to the 
      choked graph (see Figure~\ref{fig:choked-graph}) with $N$ nodes.
      As described in the text, the
      obvious choice of essential subfamily has $|\Gamma^*|\ge(N-3)!$
      paths.  However, the algorithm seems to require less than $N$
      iterations to resolve $\Mod(\Gamma)$ to within a relative error
      of $10^{-2}$ or less, as the column $|\Gamma^\prime|$ indicates.}
  \label{tab:choked-graph}
\end{table}

\subsubsection{Random $G(n,p)$ graphs}

Intuitively, one might expect that for sparse graphs there will be a
relatively small number of choke points similar to the one  in the
choked graph example; choosing a few key walks accessing those choke
points would hopefully be sufficient to approximate $\Mod(\Gamma)$.
We can obtain an empirical understanding of the behavior of the
algorithm as follows.  For fixed $n$, we take $p$ to be a random real
number chosen uniformly from the range $2\log(n)/n \le p \le n$ and
select a random graph $G$, chosen according to the Erd\H{o}s-R\'{e}nyi
$G(n,p)$ model: that is, $G$ is a graph with $n$ vertices, with each possible
edge selected independently for inclusion in the edge set with
probability $p$.  By choosing $p\ge 2\log(n)/n$, we ensure a high
likelihood that $G$ will be connected \cite[Theorem 2.8.1]{durrett}.  We then apply
Algorithm~\ref{alg:mod} with tolerance $\etol=10^{-2}$ to the set
$\Gamma$ of all walks connecting the first two vertices in
$G$.  (Again, $\texttt{shortest}$ is based on Dijkstra's algorithm.)

A natural guess for $\Gamma^*$ is the set $\Gamma_s$ of all simple
paths connecting the two vertices; every walk in $\Gamma$ contains a
simple subpath, and each simple path is minimal.  However, as will be
demonstrated, this choice of $\Gamma^*$ is much larger
than the set $\Gamma'$ needed to approximate the modulus.  We begin by
estimating $\mathbb{E}(|\Gamma_s|)$, the expected number of simple
paths connecting the first two nodes in a $G(n,p)$ graph.  For each
simple path $\gamma$ in the complete $n$-node graph, define
$\mathbf{1}_\gamma$ as the random variable which takes the value $1$
for any $G(n,p)$ graph that contains $\gamma$, and $0$ for all others.
Then
\begin{equation*}
  \mathbb{E}(|\Gamma_s|) = \mathbb{E}\left(\sum_\gamma\mathbf{1}_\gamma\right)
  = \sum_\gamma\mathbb{E}\left(\mathbf{1}_\gamma\right).
\end{equation*}
Since all edges are selected independently with probability p, the
expected value of $\mathbf{1}_\gamma$ for any $k$-hop simple path
$\gamma$ is $p^k$.  Each such path passes through $k-1$ vertices in
addition to the specially selected first and second, so there are
$(n-2)!/(n-k-1)!$ possible paths with $k$ hops.  Thus, we have
\begin{equation*}
  \mathbb{E}(|\Gamma_s|) = \sum_{k=1}^{n-1}\frac{(n-2)!}{(n-k-1)!}\,p^k.
\end{equation*}
The magnitude of this number can be approximated via Stirling's
approximation:
\begin{equation*}
  \log n! \ge
  \left(n+\frac{1}{2}\right)\log n - n + \frac{1}{2}\log 2\pi\,.
\end{equation*}
Approximating the sum by the final term, we arrive at the inequality
\begin{equation}
  \label{eq:approx-gamma-s}
  \begin{split}
    \log \mathbb{E}(|\Gamma_s|) &\ge (n-1)\log p +
    \left(n-\frac{3}{2}\right)\log (n-2) - (n-2) + \frac{1}{2}\log
    2\pi.
  \end{split}
\end{equation}

\begin{figure}[H]
  \centering
  \subfigure{
    \includegraphics[width=0.47\textwidth]{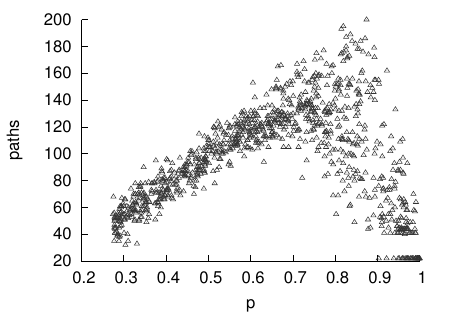}
  }
  \subfigure{
    \includegraphics[width=0.47\textwidth]{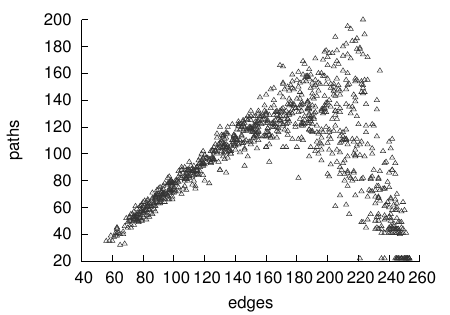}
  }
  \caption{Number of walks $|\Gamma'|$ required to approximate the
    modulus of walks between two points in a random sampling of
    $G(23,p)$ graphs plotted against the parameter $p$ (left) and
    versus the size $|E|$ (right).}
  \label{fig:random-gnp-23}

  \centering
  \subfigure{
    \includegraphics[width=0.47\textwidth]{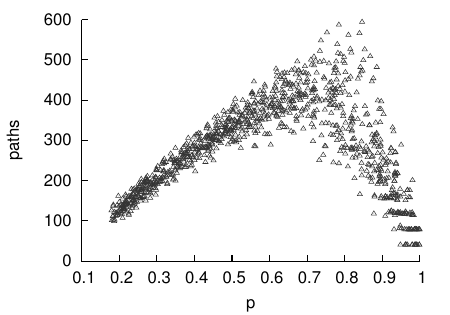}
  }
  \subfigure{
    \includegraphics[width=0.47\textwidth]{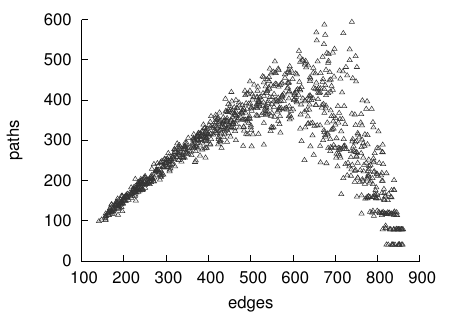}
  }
  \caption{Number of walks $|\Gamma'|$ required to approximate the
    modulus of walks between two points in a random sampling of
    $G(42,p)$ graphs plotted against the parameter $p$ (left) and
    versus the size $|E|$ (right).}
  \label{fig:random-gnp-42}
\end{figure}

The plots in Figure~\ref{fig:random-gnp-23} show the number of curves
in $\Gamma'$ upon algorithm termination versus the parameter $p$ and
number of $|E|$ for random $G(n,p)$ graphs with $n=23$ vertices
obtained as described above from $1000$ random graph samples.  In all
$1000$ graphs, $|\Gamma'|=200$ was sufficient to compute the modulus
to within the $\etol=10^{-2}$ tolerance.  For comparison,
Equation~\eqref{eq:approx-gamma-s} approximates the size of $\Gamma_s$
to be larger than $10^{13}$ for $p\ge 0.5$.

Figure~\ref{fig:random-gnp-42} displays similar plots produced from
$1000$ random graphs with $n=42$ vertices.  In this case,
$|\Gamma'|=600$ was sufficient to compute the modulus to within the
$\etol=10^{-2}$ tolerance.  For comparison,
Equation~\eqref{eq:approx-gamma-s} approximates the size of $\Gamma_s$
to be larger than $10^{47}$ for $p\ge 0.5$.

It would be interesting to be able to give a theoretical explanation for the inverted-parabola nature of the graphs in Figure~\ref{fig:random-gnp-23} and Figure~\ref{fig:random-gnp-42}.

\subsubsection{Modulus of ``via'' walks}

\begin{figure}[H]
  \centering
  \subfigure{
    \includegraphics[width=0.47\textwidth]{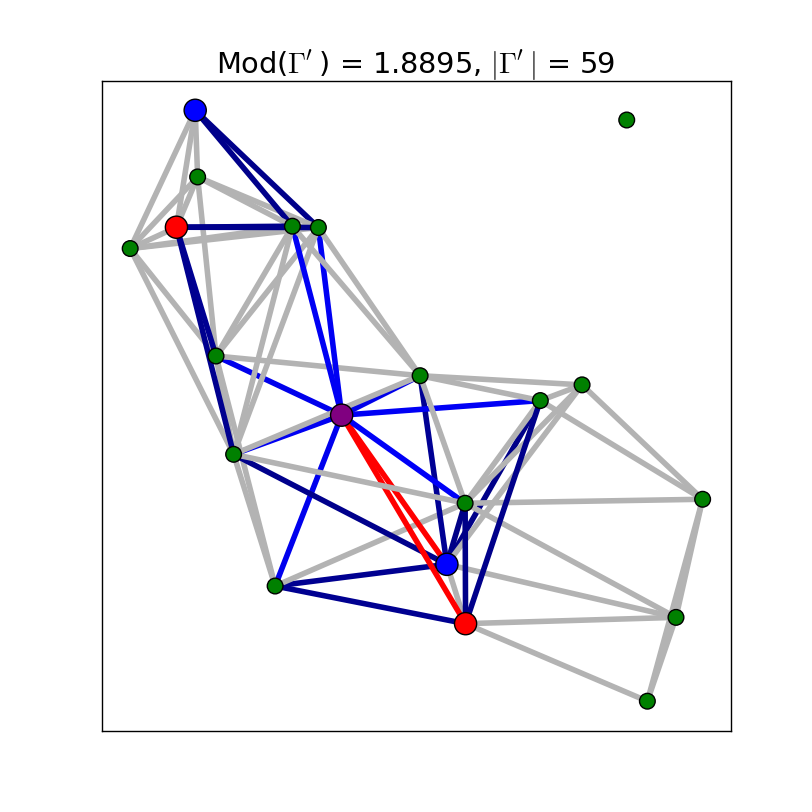}
  }
  \subfigure{
    \includegraphics[width=0.47\textwidth]{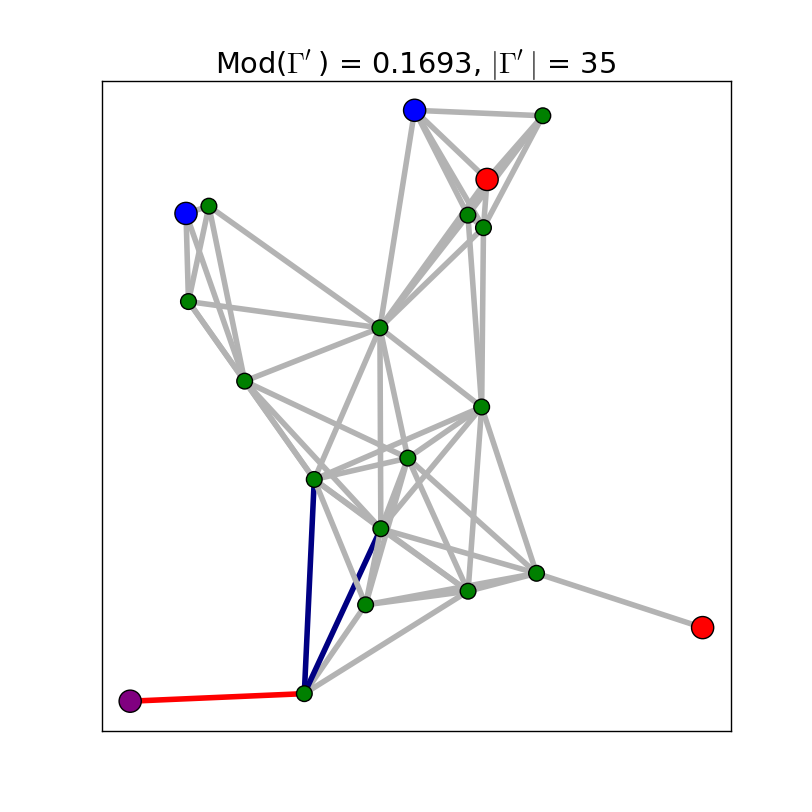}
  }
  \caption{Examples of computing the modulus of the set of all walks
    beginning at one of the large red nodes, passing through the large
    purple node, and ending at one of the large blue nodes.  The
    approximate modulus and size $|\Gamma'|$ (for $\etol=10^{-2}$) are
    reported for both examples.}
  \label{fig:thru-graphs}
\end{figure}

In order to demonstrate the flexibility of the proposed algorithm, we
consider a final example: the modulus of walks connecting two disjoint
sets of nodes $A$ and $B$ via a particular node $c$.  That is,
$\Gamma$ consists of all walks beginning in $A$ and ending in $B$
which pass through the node $c$ along the way.  In this case,
\texttt{shortest} can be implemented efficiently by iteration of
Dijkstra's algorithm.  First, the algorithm determines the shortest
walk from $A$ to $c$, then from $c$ to $B$.  The concatenation of
these two walks gives the shortest walk from $A$ to $B$ via $c$.
Figure~\ref{fig:thru-graphs} provides a visualization of the results
in two different configurations.  In the figures, the edge with
highest weight is drawn in bright red.  All edges whose weights are at
least $75\%$ of the highest weight are drawn in red, with brighter
hues representing higher weights.  All edges weighted between $25\%$
and $75\%$ of the highest weight are drawn in blue, with brighter hues
representing higher weights.  The remaining edges are drawn in gray.

\section{Conclusion}

We established the basic theory of families of walks in graphs and their modulus. This is a concept that generalizes the fundamental notion of effective conductance in electrical networks. The theory is derived from its continuous analog, where modulus has had many applications. The advantage of finite graphs is their potential for computations. We propose to use walks instead of curves, because they're more suitable to numerical algorithms, especially ones that try to find shortest walks within a family. We develop an algorithm for computing the modulus of families of walks on graphs. Our experiments show this algorithm to be fairly fast. However, more experiments and more theory is needed to obtain a complete comparison of our proposed algorithm with other possible ones. See \cite{ABPPW2015} for more theoretical investigations of modulus, and \cite{SPSA,GASSP} for examples of applications.

\bibliographystyle{amsplain}
\bibliography{references}

\end{document}